\newtheorem{theorem}{Theorem}[section]
\newtheorem{lemma}[theorem]{Lemma}
\newtheorem{proposition}[theorem]{Proposition}
\newtheorem{corollary}[theorem]{Corollary}
\theoremstyle{definition}
\newtheorem{definition}[theorem]{Definition}
\theoremstyle{remark}
\numberwithin{equation}{section}
\newcommand{\SSYT}{\ensuremath\mathrm{SSYT}}
\newcommand{\SYT}{\ensuremath\mathrm{SYT}}
\newcommand{\shSSYT}{\ensuremath\mathrm{SSShT}}
\newcommand{\shSYT}{\ensuremath\mathrm{SShT}}
\newcommand{\siSYT}{\ensuremath\mathrm{SShT}_{\pm}}
\newcommand{\siSYTp}{\ensuremath\mathrm{SShT}_{\pm}^{\prime}}
\newcommand{\A}{\ensuremath\mathcal{A}}
\newcommand{\C}{\ensuremath\mathcal{C}}
\newcommand{\D}{\ensuremath\mathcal{D}}
\newcommand{\Des}{\ensuremath\mathrm{Des}}
\newcommand{\Peak}{\ensuremath\mathrm{Peak}}
\newcommand{\Spike}{\ensuremath\mathrm{Spike}}
\newcommand{\sd}{\ensuremath\mathrm{b}}
\newlength\cellsize \setlength\cellsize{12\unitlength}
\newcommand\cellify[1]{\def\thearg{#1}\def\nothing{}%
\ifx\thearg\nothing
\vrule width0pt height\cellsize depth0pt\else
\hbox to 0pt{\usebox2\hss}\fi%
\vbox to 12\unitlength{
\vss
\hbox to 12\unitlength{\hss$#1$\hss}
\vss}}
\newcommand\tableau[1]{\vtop{\let\\=\cr
\setlength\baselineskip{-12000pt}
\setlength\lineskiplimit{12000pt}
\setlength\lineskip{0pt}
\halign{&\cellify{##}\cr#1\crcr}}}
\newcommand{\cs}[1]{c@{\hskip #1ex}}
\newcommand{\B}{\bullet}
\newcommand{\e}{\mbox{}}
\definecolor{boxgray}{gray}{.8}
\newcommand{\cb}{\color{boxgray}\rule{1\cellsize}{1\cellsize}\hspace{-\cellsize}\usebox2}
\begin{document}


\title[Shifted dual equivalence]{Shifted dual equivalence and Schur $P$-positivity}

\author[S. Assaf]{Sami Assaf}
\address{Department of Mathematics, University of Southern California, Los Angeles, CA 90089-2532}  
\email{shassaf@usc.edu}
\thanks{Work supported in part by NSF grant DMS-1265728.}

\subjclass[2010]{%
Primary   05E05; 
Secondary 05E10, 
          05A05.  
}


\keywords{shifted tableaux, hyperoctahedral group, Schur
  $P$-functions, Schur $Q$-functions, dual equivalence graphs,
  quasisymmetric functions}


\begin{abstract}
  By considering type B analogs of permutations and tableaux, we
  extend abstract dual equivalence to type B in two directions. In one
  direction, we define involutions on shifted tableaux that give a
  dual equivalence, thereby giving a new combinatorial proof of the
  Schur positivity of Schur $Q$- and $P$-functions. In another
  direction, we define an abstract shifted dual equivalence parallel
  to dual equivalence and prove that it can be used to establish Schur
  $P$-positivity of a function expressed as a sum of shifted
  fundamental quasisymmetric functions. As a first application, we
  give a new combinatorial proof that the product of Schur
  $P$-functions is Schur $P$-positive.
\end{abstract}

\maketitle

\section{Introduction}
\label{s:introduction}

Symmetric function theory can be harnessed by other areas of
mathematics to answer fundamental enumerative questions. For example,
multiplicities of irreducible components, dimensions of algebraic
varieties, and various other algebraic constructions that require the
computation of certain integers may often be translated to the
computation of the coefficients of a given function in a particular
basis. Often the chosen basis is the Schur functions, which arise as
Frobenius characters of irreducible representations of the symmetric
group and as Schubert polynomials for the cohomology ring of the
Grassmannian. Thus a quintessential problem in symmetric functions is
to prove that a given function has nonnegative integer coefficients
when expressed as a sum of Schur functions.

In \cite{Assaf-DEGs-I}, the author introduced dual equivalence graphs
as a universal tool by which one can approach such problems. This tool
has been applied to various important classes of symmetric functions,
include LLT and Macdonald polynomials \cite{Assaf-DEGs-II}, $k$-Schur
functions \cite{AssafBilley2012}, and products of Schubert polynomials
\cite{ABS2014}.

In this paper, we give a further application of dual equivalence to
Schur $Q$- and $P$-functions \cite{Schur1911}. These functions arise
in the study of projective representation of the symmetric group
\cite{Stembridge1989} as well as the cohomology classes dual to
Schubert cycles in isotropic Grassmannians
\cite{Jozefiak1991,Pragacz1991}. These functions enjoy many nice
properties parallel to Schur functions \cite{Macdonald}. In
particular, they form dual bases for an important subspace of
symmetric functions. While they have long been known to be Schur
positive \cite{Sagan1987} and to have positive structure constants
\cite{Stembridge1989}, the new proofs we provide lay the foundation
for a stronger extension of dual equivalence to type B. We define an
abstract notion of shifted dual equivalence that offers a tool by
which one can show that a given function has nonnegative coefficients
when expanded in terms of Schur $P$-functions. As a first application,
we consider the Schur $P$-expansion of a product Schur
$P$-functions. Related work by \cite{BilleyEtAl} holds
further applications.

This paper is organized as follows. In Section~\ref{s:tableaux}, we
introduce the classic combinatorial objects and their type B
analogs. We connect the combinatorics with symmetric and
quasisymmetric functions in Sections~\ref{s:symmetric} and
\ref{s:quasi}. In Section~\ref{s:dual_equivalence}, we review abstract
dual equivalence, and we give an application to type B combinatorial
objects in Section~\ref{s:signed} proving that Schur $P$-functions are
Schur positive. In Section~\ref{s:shifted}, we generalize the
definitions and theorems of dual equivalence to the type B setting and
define an abstract notion of shifted dual equivalence. Our main
result, Theorem~\ref{thm:positivity_sh}, is that this provides a
universal tool for establishing Schur $P$-positivity. In
Section~\ref{s:weak}, we apply this new theory to give a new proof
that the product of Schur $P$-functions is Schur $P$-positive.

\section{Partitions and tableaux}
\label{s:tableaux}

The main combinatorial objects we study are partitions, tableaux, and
permutations, with their type B analogs being strict partitions,
shifted tableaux, and signed permutations.

A \emph{partition} $\lambda$ is a non-increasing sequence of positive
integers, $\lambda = (\lambda_1,\lambda_2, \ldots, \lambda_{\ell})$,
where $\lambda_1 \geq \lambda_2 \geq \cdots \geq \lambda_{\ell} > 0$.
A \emph{strict partition} $\gamma$ is a partition whose parts are
strictly decreasing, i.e. $\gamma_1 > \gamma_2 > \cdots >
\gamma_{\ell} > 0$. The \emph{size} of a partition is the sum of its
parts, i.e. $\lambda_1 + \lambda_2 + \cdots + \lambda_{\ell}$.

We identify a partition $\lambda$ with its Young diagram, the
collection of left-justified cells with $\lambda_i$ cells in row
$i$. For a strict partition $\gamma$, the shifted Young diagram is the
Young diagram with row $i$ shifted $\ell(\gamma)-i$ cells to the
left. It will often be useful to consider the shifted symmetric
diagram for $\gamma$, which is obtained by adjoining the reflection of
the shifted diagram. For examples, see Figure~\ref{fig:shapes}.

\begin{figure}[ht]
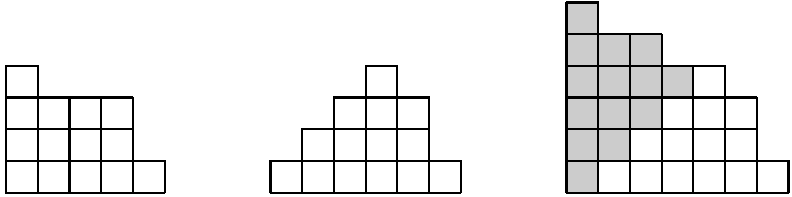

  \begin{center}
    \begin{displaymath}
      \tableau{\\ \\ \e \\ \e & \e & \e & \e \\ \e & \e & \e & \e \\ \e & \e & \e & \e & \e}      
      \hspace{4em}
      \tableau{\\ \\  & & & \e \\ &  & \e & \e & \e \\ & \e & \e & \e & \e \\ \e & \e & \e & \e & \e & \e}
      \hspace{4em}
      \tableau{ \cb \\ \cb & \cb & \cb \\ \cb & \cb & \cb & \cb & \e \\ \cb & \cb & \cb & \e & \e & \e \\ \cb & \cb & \e & \e & \e & \e \\ \cb & \e & \e & \e & \e & \e & \e}
    \end{displaymath}
    \caption{\label{fig:shapes}The Young diagram for $(5,4,4,1)$, and the
      shifted Young diagram and shifted symmetric diagram for $(6,4,3,1)$.}
  \end{center}
\end{figure}

A \emph{semi-standard Young tableau} of shape $\lambda$ is a filling
of the Young diagram for $\lambda$ with positive integers such that
entries weakly increase along rows and strictly increase up
columns. For example, see Figure~\ref{fig:SSYT}.

\begin{figure}[ht]
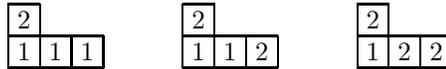

  \begin{center}
    \begin{displaymath}
      \tableau{2 \\ 1 & 1 & 1} \hspace{3em}
      \tableau{2 \\ 1 & 1 & 2} \hspace{3em}
      \tableau{2 \\ 1 & 2 & 2}    
    \end{displaymath}
    \caption{\label{fig:SSYT}The semi-standard Young tableaux of shape
      $(3,1)$ with entries in $\{1,2\}$.}
  \end{center}
\end{figure}

A \emph{semi-standard shifted tableau} of shape $\gamma$ is a filling
of the shifted Young diagram for $\gamma$ with marked or unmarked
positive integers such that entries weakly increase along rows and
columns according to the ordering $1^{\prime} < 1 < 2^{\prime} < 2 <
\cdots$, each row has at most one marked entry $i^{\prime}$ for each
$i$ and each column has at most one unmarked entry $i$ for each
$i$. For examples, see Figure~\ref{fig:shSSYT}. 

\begin{figure}[ht]
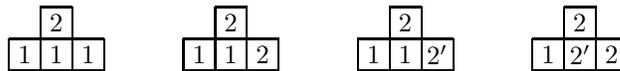

  \begin{center}
    \begin{displaymath}
      \tableau{& 2 \\ 1 & 1 & 1} \hspace{3em}
      \tableau{& 2 \\ 1 & 1 & 2} \hspace{3em}
      \tableau{& 2 \\ 1 & 1 & 2^{\prime}} \hspace{3em}
      \tableau{& 2 \\ 1 & 2^{\prime} & 2}    
    \end{displaymath}
    \caption{\label{fig:shSSYT}The semi-standard shifted tableaux of
      shape $(3,1)$ with entries in $\{1^{\prime},1,2^{\prime},2\}$
      and no marked entries on the main diagonal.}
  \end{center}
\end{figure}

Note that these latter conditions allow any entry along the main
diagonal to be marked or unmarked without changing the allowed entries for other positions. If one considers instead the
shifted symmetric diagram and leaves unmarked letters in place while
reflecting the marked letters, then the conditions for a semi-standard
tableau are the same for straight and for shifted shapes. For example,
see Figure~\ref{fig:symSSYT}.

\begin{figure}[ht]
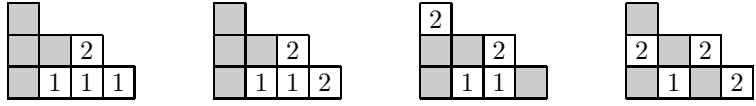

  \begin{center}
    \begin{displaymath}
      \tableau{\cb \\ \cb & \cb & 2 \\ \cb & 1 & 1 & 1} \hspace{3em}
      \tableau{\cb \\ \cb & \cb & 2 \\ \cb & 1 & 1 & 2} \hspace{3em}
      \tableau{2 \\ \cb & \cb & 2 \\ \cb & 1 & 1 & \cb} \hspace{3em}
      \tableau{\cb \\ 2 & \cb & 2 \\ \cb & 1 & \cb & 2}    
    \end{displaymath}
    \caption{\label{fig:symSSYT}The semi-standard shifted symmetric
      tableaux of shape $(3,1)$ with entries in $\{1,2\}$ and no
      reflected entries on the main diagonal.}
  \end{center}
\end{figure}

The \emph{reading word} of a semi-standard tableau $T$, denoted
$w(T)$, is the word obtained by reading the rows of $T$ left to right,
from top to bottom. For example, the reading words for the tableaux in
Figure~\ref{fig:SSYT} from left to right are $2111, 2112, 2122$, and
the reading words for the shifted symmetric tableaux in Figure~\ref{fig:symSSYT}
from left to right are $2111, 2112, 2211, 2212$.

A \emph{permutation} of $n$ is an ordering of the numbers
$\{1,2,\ldots,n\}$. A semi-standard tableau $T$ is \emph{standard} if its
reading word is a permutation. For example, see Figures~\ref{fig:SYT}
and \ref{fig:shSYT}. 

\begin{figure}[ht]
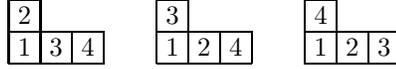

  \begin{center}
    \begin{displaymath}
      \tableau{2 \\ 1 & 3 & 4} \hspace{2em} \tableau{3 \\ 1 & 2 &
        4} \hspace{2em} \tableau{4 \\ 1 & 2 & 3}
    \end{displaymath}
    \caption{\label{fig:SYT}The standard Young tableaux of shape
      $(3,1)$.} 
  \end{center}
\end{figure}

A semi-standard shifted symmetric tableau is a \emph{standard marked tableau} if
its reading word is a permutation and it has entries in the reflected side. For example, each of the two
standard shifted tableaux in Figure~\ref{fig:shSYT} has $2^4$ marked
analogs of which $2^2$ have no signs on the main diagonal; see
Figure~\ref{fig:dual_shifted}.

\begin{figure}[ht]
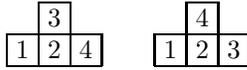

  \begin{center}
    \begin{displaymath}
      \tableau{& 3 \\ 1 & 2 & 4} \hspace{2em}
      \tableau{& 4 \\ 1 & 2 & 3}    
    \end{displaymath}
    \caption{\label{fig:shSYT}The standard shifted tableaux of shape
      $(3,1)$.} 
  \end{center}
\end{figure}

The \emph{descent set} of a permutation is given by
\begin{equation}
  \Des(w) \ = \ \left\{ i \ | \ \mbox{$i$ right of $i+1$} \right\}.
  \label{e:des}
\end{equation}
When $w$ is a permutation of length $n$, we have $\Des(w) \subseteq
\{1,2,\ldots,n-1\}$. When we wish to emphasize $n$, we write
$\Des_n$. Note that there are $2^{n-1}$ possible descent sets for
permutations of length $n$. The \emph{descent set} of a standard
tableau or a marked standard tableau is the descent set of its reading
word. For the tableaux in Figure~\ref{fig:SYT}, the descent sets from
left to right are $\{1\}, \{2\}, \{3\}$, and for the tableaux in
Figure~\ref{fig:shSYT}, the descent sets from left to right are
$\{2\}, \{3\}$. 

In addition to the descent set, we will often be interested in the
\emph{peak set} and the \emph{spike set}, which can be derived
directly from the descent set. For a set $D$, we have
\begin{eqnarray}
  \Spike(D) & = & \{ i \ | \ \mbox{$i-1\not\in D$ and $i\in D$ or
    $i-1\in D$ and $i\not\in D$}\},
  \label{e:spikes} \\
  \Peak(D) & = & \{ i \ | \ \mbox{$i-1\not\in D$ and $i\in D$}\}.
  \label{e:peaks}
\end{eqnarray}
Note that if $D \subseteq \{1,2,\ldots,n-1\}$, then $\Spike(D),
\Peak(D) \subseteq \{2,3,\ldots,n-1\}$. Furthermore, peak sets are
characterized as subsets containing no consecutive entries. Thus there
are $2^{n-2}$ possible spike sets and $F_n$, the $n$th Fibonacci
number, possible peak sets for permutations of length $n$. As with
descents, when we wish to emphasize $n$, we write $\Spike_n$ or
$\Peak_n$.

\section{Symmetric functions}
\label{s:symmetric}

We follow notation from \cite{Macdonald} for the classic bases for
$\Lambda$, the ring of symmetric functions. The space $\Lambda_n$ of
symmetric functions homogeneous of degree $n$ has dimension equal to
the number of partitions of $n$, and so bases for $\Lambda_n$ are
naturally indexed by partitions of $n$. The most fundamental basis for
$\Lambda_n$ is the Schur function basis, which may be defined by
\begin{equation}
  s_{\lambda}(X) \ = \ \sum_{T \in \SSYT(\lambda)} X^T,
  \label{e:schur}
\end{equation}
where $\SSYT(\lambda)$ denotes the set of all semi-standard Young
tableaux of shape $\lambda$, and $X^T$ is the monomial where $x_i$
occurs in $X^T$ with the same multiplicity with which $i$ occurs in
$T$. For example, the three tableaux in Figure~\ref{fig:SSYT}
contribute $x_1^3 x_2 + x_1^2 x_2^2 + x_1 x_2^3$ to the Schur function
$s_{(3,1)}(X)$. 

The irreducible characters of the symmetric group map under the
Frobenius isomorphism to Schur functions. Therefore Schur functions
are fundamental to understanding representations of the symmetric
group.

Schur $Q$-functions, indexed by strict partitions, are given by
\begin{equation}
  Q_{\gamma} (X) \ = \ \sum_{S \in \shSSYT(\gamma)} X^{|S|},
\label{e:schur_Q}
\end{equation}
where $\shSSYT(\gamma)$ denotes the set of all semi-standard shifted
tableaux of shifted shape $\gamma$, and $X^{|S|}$ is the monomial
where $x_i$ occurs in $X^{|S|}$ with the same multiplicity with which
$i$ and $i^{\prime}$ occur in $S$. For example, the four tableaux in
Figure~\ref{fig:shSSYT} contribute $x_1^3 x_2 + 2 x_1^2 x_2^2 + x_1
x_2^3$ to the Schur $Q$-function $Q_{(3,1)}(X)$.

Schur $P$-functions indexed by strict partitions are given by
\begin{equation}
  P_{\gamma} (X) \ = \ 2^{-\ell(\gamma)} Q_{\gamma}(X) \ =
  \ \sum_{S \in \shSSYT^{*}(\gamma)} X^{|S|},
\label{e:schur_P}
\end{equation}
where $\shSSYT^{*}(\gamma)$ denotes the set of all semi-standard
shifted tableaux of shifted shape $\gamma$ where the main diagonal
has no marked entries, and $X^{|S|}$ is again the monomial where $x_i$
occurs in $X^{|S|}$ with the multiplicity with which $i$ and
$i^{\prime}$ occur in $S$. The second equality follows easily from the
first if one notes that the rules for which entries may be marked
never precludes a marked entry along the main diagonal. 

Schur $P$-functions are fundamental to understanding projective
representations of the symmetric group \cite{Stembridge1989} similar
to the role of Schur functions for linear representations.

Let $\Gamma \subset \Lambda$ denote the subspace of symmetric
functions generated by the odd power sum symmetric functions. The
graded component $\Gamma_n = \Gamma \cap \Lambda_n$ has dimension
equal to the number of strict partitions of $n$. The Schur $Q$- and
$P$-functions may be realized as specializations of Hall-Littlewood
functions at $t=-1$ \cite{Macdonald}. For $\lambda$ a nonstrict
partition, the specialization $Q_{\lambda}(X;-1)$ vanishes, but for
$\lambda$ strict these specializations form dual bases for $\Gamma_n$.

Since the Schur $Q$- and $P$-functions are symmetric, they can be
expanded in the Schur basis. Since the Schur $P$-functions form a
basis for $\Gamma$, the product of two Schur $P$-functions may be
expanded in the Schur $P$-function basis. The machinery we develop in
this paper reproves the following positivity results.

\begin{theorem}
  For $\gamma,\delta$ strict partitions, if
    \begin{equation}
      P_{\gamma} (X) \ = \ \sum_{\lambda} g_{\gamma,\lambda}
      s_{\lambda}(X)
      \hspace{2em} \mbox{and} \hspace{2em}
      P_{\gamma} (X) P_{\delta} (X) \ = \ \sum_{\varepsilon}
      f_{\gamma,\delta}^{\varepsilon} P_{\varepsilon}(X),
      \label{e:P_schur}
    \end{equation}
    then $g_{\gamma,\lambda}, f_{\gamma,\delta}^{\varepsilon} $ are
    nonnegative integers.
\label{thm:P_pos}
\end{theorem}

Stanley conjectured the positivity of $g_{\gamma,\lambda}$, which
follows as a corollary to Sagan's shifted insertion \cite{Sagan1987}
independently developed by Worley \cite{Worley1984}. These ideas were
extended by Stembridge \cite{Stembridge1989} in his study of
projective representations of the symmetric group to give a proof of
the positivity of $f_{\gamma,\delta}^{\varepsilon}$. More recently, Cho
\nocite{Cho2013} built on work of Serrano \cite{Serrano2010} to give
another positivity proof.

One of the main results of this paper is to give another combinatorial
proof of Theorem~\ref{thm:P_pos} using dual equivalence and shifted
dual equivalence, respectively.

\section{Quasisymmetric functions}
\label{s:quasi}

The space of quasisymmetric functions contains $\Lambda$ and provides
nice intermediate bases for \eqref{e:schur} and for \eqref{e:schur_Q},
\eqref{e:schur_P}. The subspace of quasisymmetric functions
homogeneous of degree $n$ has dimension $2^{n-1}$ and, as such, is
naturally indexed by subsets of $\{1,2,\ldots,n-1\}$.  Gessel's
fundamental basis for quasisymmetric functions \cite{Gessel1984} is
given by
\begin{equation}
  F_{D}(X) \ = \
  \sum_{\substack{i_1 \leq \cdots \leq i_n \\ j \in D \Rightarrow i_j < i_{j+1}}} 
  x_{i_1} \cdots x_{i_n} .
  \label{e:quasisym}
\end{equation}
Implicit in our notation is that $D$ is regarded as a descent set for
objects of size $n$. When we wish to make this explicit, we write
$F_{n,D}$ or $F_{D_n}$. 

One great advantage of quasisymmetric functions is that they
facilitate the use of standard in place of semi-standard objects,
allowing us to use a finite number of terms in the expression even
when there is an infinite number of variables. For example, we have
the following expansion for Schur functions due to
Gessel\cite{Gessel1984}.

\begin{theorem}[\cite{Gessel1984}]
  For $\lambda$ a partition of $n$, we have
  \begin{equation}
    s_{\lambda}(X) \ = \ \sum_{T \in \SYT(\lambda)} F_{\Des(T)}(X),
    \label{e:quasi_schur}
  \end{equation}
  where $\SYT(\lambda)$ denotes the set of all standard Young tableaux
  of shape $\lambda$.
\label{thm:quasi_schur}
\end{theorem}

For example, for $n = 4$, we have
\begin{eqnarray*}
s_{(3,1)} & = & F_{\{1\}} + F_{\{2\}} + F_{\{3\}} \\
s_{(2,2)} & = & F_{\{1,3\}} + F_{\{2\}} \\
s_{(2,1,1)} & = & F_{\{1,2\}} + F_{\{1,3\}} + F_{\{2,3\}} 
\end{eqnarray*}

Analogously, we may express the Schur $P$-functions in terms of the
fundamental basis.

\begin{proposition}
  For $\gamma$ a strict partition of $n$, we have
  \begin{eqnarray}
    Q_{\gamma} (X) & = & \sum_{S \in \siSYT(\gamma)} F_{\Des(S)}(X) = 2^{\ell(\gamma)} \sum_{S \in \siSYTp(\gamma)} F_{\Des(S)}(X)
    \label{e:quasi_Q}\\
    P_{\gamma} (X) & = & \sum_{S \in \siSYTp(\gamma)} F_{\Des(S)}(X),
    \label{e:quasi_P}
  \end{eqnarray}
  where $\siSYT(\gamma)$ denotes the set of all marked standard
  tableaux of shape $\gamma$, and $\siSYTp(\gamma)$ denotes the subset
  where no entry on the main diagonal is marked.
\label{prop:quasi_P}
\end{proposition}

\begin{proof}
  The formula follows from \eqref{e:schur_P} by standardizing the
  reading word while maintaining the positions of the marked
  letters. This is well-defined given the chosen total order.
\end{proof}

For example, signing the entries in Figure~\ref{fig:shSYT}, in all
possible ways gives 
$$P_{(3,1)} = F_{\{1\}} + 2F_{\{2\}} + F_{\{3\}} + F_{\{1,2\}} +
2F_{\{1,3\}} + F_{\{2,3\}}.$$ 

The similarity between \eqref{e:quasi_schur} and \eqref{e:quasi_P} is
the key to our proof of Theorem~\ref{thm:P_pos}. In particular, it is
easy to compute from the above expansion that
$$P_{(3,1)} = s_{(3,1)} + s_{(2,2)} + s_{(2,1,1)}.$$

However, notice that the summation in \eqref{e:quasi_P} is not over
\emph{standard} objects for type B. For this, we need a new family of
quasisymmetric functions.

For $P \subseteq \{2,3,\ldots,n-1\}$ with no consecutive entries,
define the shifted fundamental quasisymmetric function $G_P(X)$ by
\begin{equation}
  G_{P}(X) \ = \ \sum_{P \subseteq \Spike(D)} F_{D}(X),
  \label{e:quasisym_P}
\end{equation}
where the sum is over all subsets $D \subseteq \{1,2,\ldots,n-1\}$ for
which $\Spike(D)$ contains $P$. Again, when we wish to emphasize $n$,
we may write $G_{n,P}$ or $G_{P_n}$. For example, for $n=4$, we have
\begin{eqnarray*}
G_{\{2\}}(X) & = & F_{\{1\}}(X) + F_{\{2\}}(X) + F_{\{1,3\}}(X) + F_{\{2,3\}}(X), \\
G_{\{3\}}(X) & = & F_{\{2\}}(X) + F_{\{3\}}(X) + F_{\{1,2\}}(X) + F_{\{1,3\}}(X).
\end{eqnarray*}

The shifted fundamental quasisymmetric functions of degree $n$ form a
basis for a subspace of quasisymmetric functions of degree $n$ of
dimension the $n$th Fibonacci number. The shifted fundamental
quasisymmetric functions allow us to rewrite the Schur $P$-functions
as follows.

\begin{theorem}
  For $\gamma$ a strict partition of $n$, we have
  \begin{eqnarray}
    Q_{\gamma} (X) & = & \sum_{S \in \shSYT(\gamma)} 2^{|\Peak(S)|+1} G_{\Peak(S)}(X)
    \label{e:quasi_QH} \\
    P_{\gamma} (X) & = & 2^{-\ell(\gamma)} \sum_{S \in \shSYT(\gamma)} 2^{|\Peak(S)|+1} G_{\Peak(S)}(X),
    \label{e:quasi_PH}
  \end{eqnarray}
  where $\shSYT(\gamma)$ denotes the set of all standard shifted
  tableaux of shape $\gamma$, and $\Peak(S) = \Peak(\Des(S))$.
\label{thm:quasi_PH}
\end{theorem}

Note that for any $S \in \shSYT(\gamma)$, $|\Peak(S)| \geq
\ell(\gamma)-1$, so the expansion of $P_{\gamma}$ in terms of the
shifted fundamental quasisymmetric functions is integral. 

\begin{proof}
  Fix $S \in \shSYT(\gamma)$, and suppose $T \in \siSYT(\gamma)$ is
  such that removing the markings on $T$ gives $S$. If $i-1 \not\in
  \Des(S)$, then $i-1 \in \Des(T)$ if and only if $i$ is marked, and
  if $i-1 \in \Des(S)$, then $i-1 \in \Des(T)$ if and only if $i-1$ is
  unmarked. 

  First we claim that any $T \in \siSYT(\gamma)$ that gives $S$ when
  the markings are removed satisfies $\Peak(S) \subseteq
  \Spike(\Des(T))$. To see this, note that $i \in \Peak(S)$ if and
  only if both $i-1 \not\in \Des(S)$ and $i \in \Des(S)$. By the
  previous analysis, if $i$ is marked in $T$, then $i-1 \in \Des(T)$
  and $i \not\in \Des(T)$, and if $i$ is unmarked in $T$, then $i-1
  \not\in \Des(T)$ and $i \in \Des(T)$. Therefore $i \in
  \Spike(\Des(T))$.

  Next we claim that for any $D \subset [n-1]$ for which $\Peak(S)
  \subseteq \Spike(D)$, there are exactly $2^{|\Peak(S)|+1}$ standard marked
  tableaux $T$ that give $S$ when the markings are removed for which
  $\Des(T) = D$. Indeed, for $i \in \Peak(S)$, set $h = \min \{k | k <
  i \mbox{ and } k \not\in \Des(S)\}$ and set $j= \max \{k | k > i
  \mbox{ and } k \in \Des(S)\}$. Then, by the analysis above, $D$
  determines the markings for all $h < k \leq i$ and all $i \leq k <
  j$, but toggling the marking for $h$ or $j$ does not change
  $D$. If $i < i^{\prime}$ are consecutive entries of $\Peak(S)$, then
  the $j$ for $i$ and the $h$ for $i^{\prime}$ coincide. Thus there
  are exactly $|\Peak(S)|+1$ letters that can be marked or unmarked without
  affecting $D$.

  These two claims prove the expansion for $Q_{\gamma}$, and the
  result for $P_{\gamma}$ follows by \eqref{e:schur_P}.
\end{proof}

For example, using Figure~\ref{fig:shSYT}, we compute 
$$P_{(3,1)} = G_{\{2\}} + G_{\{3\}}.$$

\section{Dual equivalence and Schur positivity}
\label{s:dual_equivalence}

Haiman \cite{Haiman1992} defined \emph{elementary dual equivalence
  involutions} on permutations as follows. If $a,b$ are two
consecutive letters of the word $w$, and $c$ is also consecutive with
$a,b$ and appears between $a$ and $b$ in $w$, then interchanging $a$
and $b$ is an elementary dual equivalence move. In this case, we refer
to $c$ as the \emph{witness} for the dual equivalence interchanging
$a$ and $b$. When $\{a,b,c\} = \{i-1,i,i+1\}$, we denote this
involution by $d_i$, and we regard words with $c$ not between $a$ and
$b$ as fixed points for $d_i$. For examples, see
Figure~\ref{fig:elementary}.

\begin{figure}[ht]
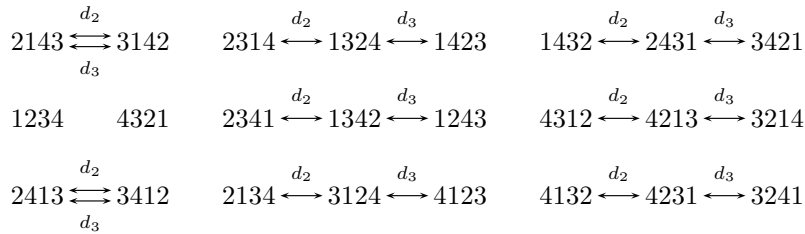

    \begin{displaymath}
      \begin{array}{c@{\hskip 2em}c@{\hskip 2em}c@{\hskip 2em}c@{\hskip 2em}c@{\hskip 2em}c@{\hskip 2em}c@{\hskip 2em}c}
        \rnode{a2}{2143} & \rnode{a3}{3142} & \rnode{b1}{2314} & \rnode{b2}{1324} & \rnode{b3}{1423} 
        & \rnode{b4}{1432} & \rnode{b5}{2431} & \rnode{b6}{3421} \\[4ex]
        \rnode{a1}{1234} & \rnode{a6}{4321} & \rnode{c1}{2341} & \rnode{c2}{1342} & \rnode{c3}{1243} 
        & \rnode{c4}{4312} & \rnode{c5}{4213} & \rnode{c6}{3214} \\[4ex]
        \rnode{a4}{2413} & \rnode{a5}{3412} & \rnode{d1}{2134} & \rnode{d2}{3124} & \rnode{d3}{4123} 
        & \rnode{d4}{4132} & \rnode{d5}{4231} & \rnode{d6}{3241}
      \end{array} 
      \everypsbox{\scriptstyle}
      \psset{nodesep=2pt,linewidth=.1ex}
      \ncline[offset=2pt]{<->} {a2}{a3} \naput{d_2}
      \ncline[offset=2pt]{<->} {a3}{a2} \naput{d_3}
      \ncline[offset=2pt]{<->} {a4}{a5} \naput{d_2}
      \ncline[offset=2pt]{<->} {a5}{a4} \naput{d_3}
      \ncline{<->} {b1}{b2} \naput{d_2}
      \ncline{<->} {b2}{b3} \naput{d_3}
      \ncline{<->} {b4}{b5} \naput{d_2}
      \ncline{<->} {b5}{b6} \naput{d_3}
      \ncline{<->} {c1}{c2} \naput{d_2}
      \ncline{<->} {c2}{c3} \naput{d_3}
      \ncline{<->} {c4}{c5} \naput{d_2}
      \ncline{<->} {c5}{c6} \naput{d_3}
      \ncline{<->} {d1}{d2} \naput{d_2}
      \ncline{<->} {d2}{d3} \naput{d_3}
      \ncline{<->} {d4}{d5} \naput{d_2}
      \ncline{<->} {d5}{d6} \naput{d_3}
    \end{displaymath}
    \caption{\label{fig:elementary}The dual equivalence
      classes of permutations of length $4$.}
\end{figure}

Two permutations $w$ and $u$ are \emph{dual equivalent} if there
exists a sequence $i_1,\ldots,i_k$ such that $u = d_{i_k} \cdots
d_{i_1}(w)$. Haiman \cite{Haiman1992} showed that the dual equivalence
involutions extend to standard Young tableaux via their reading words
and that dual equivalence classes correspond precisely to all standard
Young tableaux of a given shape, e.g. see Figure~\ref{fig:classes}.

\begin{figure}[ht]
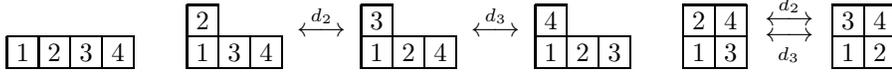

    \begin{displaymath}
      \tableau{ \\ 1 & 2 & 3 & 4}
      \hspace{2em}
      \tableau{2 \\ 1 & 3 & 4} \hspace{.3em} 
      \stackrel{d_2}{\longleftrightarrow} 
      \hspace{.3em} \tableau{3 \\ 1 & 2 & 4} \hspace{.3em}
      \stackrel{d_3}{\longleftrightarrow} 
      \hspace{.3em} \tableau{4 \\ 1 & 2 & 3} 
      \hspace{2em}
      \tableau{2 & 4 \\ 1 & 3} \hspace{.2em} 
      \begin{array}{c}
        \stackrel{d_2}{\longleftrightarrow} \\[-.5ex]
        \stackrel{\displaystyle\longleftrightarrow}{_{d_3}}
      \end{array}
      \hspace{.2em} \tableau{3 & 4 \\ 1 & 2}
    \end{displaymath}
    \caption{\label{fig:classes}Three dual equivalence classes of $\SYT$
      of size $4$.}
\end{figure}

Given this, we may rewrite \eqref{e:quasi_schur} in terms of dual
equivalence classes as
\begin{equation}
  s_{\lambda}(X) \ = \ \sum_{T \in [T_{\lambda}]} F_{\Des(T)}(X),
\label{e:classes}
\end{equation}
where $[T_{\lambda}]$ denotes the dual equivalence class of some fixed
$T_{\lambda} \in \SYT(\lambda)$. 

This paradigm shift to summing over objects in a dual equivalence
class is the basis for the universal method for proving that a
quasisymmetric generating function is symmetric and Schur positive
\cite{Assaf-DEGs-I}.  Motivated by \eqref{e:classes}, we have the
abstract notion of dual equivalence for any set of objects endowed
with a descent set.

Given involutions $\varphi_2,
\ldots, \varphi_{n-1}$ on a set $\mathcal{A}$ of combinatorial objects, for $1 < h < i < n$ we
consider the \emph{restricted dual equivalence class}
$[T]_{(h-1,i+1)}$ generated by $\varphi_h,\ldots,\varphi_i$. In
addition, we consider the \emph{restricted and shifted descent set}
$\mathrm{Des}_{(h,i)}(T)$ obtained by intersecting $\mathrm{Des}(T)$
with $\{h-1,\ldots,i\}$ and subtracting $h-2$ from each element so
that $\mathrm{Des}_{(h,i)}(T) \subseteq [i-h+2]$.

\begin{definition}[\cite{Assaf-DEGs-I}]
  Let $\mathcal{A}$ be a finite set, and let $\mathrm{Des}$ be a map
  on $\mathcal{A}$ such that $\mathrm{Des}(T) \subseteq [n-1]$ for all
  $T \in \mathcal{A}$. A \emph{dual equivalence for
    $(\mathcal{A},\mathrm{Des})$} is a family of involutions
  $\{\varphi_i\}_{1<i<n}$ on $\mathcal{A}$ such that

  \renewcommand{\theenumi}{\roman{enumi}}
  \begin{enumerate}
  \item For all $i-h \leq 3$ and all $T \in \mathcal{A}$, there
    exists a partition $\lambda$ of $i-h+3$ such that
    \[ \sum_{U \in [T]_{(h,i)}} F_{\mathrm{Des}_{(h,i)}(U)}(X) = s_{\lambda}(X). \] 
    
  \item For all $|i-j| \geq 3$ and all $T \in\mathcal{A}$, we have
    \begin{displaymath}
      \varphi_{j} \varphi_{i}(T) = \varphi_{i} \varphi_{j}(T).
    \end{displaymath}

  \end{enumerate}

  \label{def:strong}
\end{definition}

By \eqref{e:classes}, dual equivalence classes of tableaux precisely
correspond to Schur functions. Definition~\ref{def:strong} was
formulated so that the same property holds true for dual equivalence
classes for any pair $(\A,\Des)$. 

\begin{theorem}[\cite{Assaf-DEGs-I}]
  If $\{\varphi_i\}$ is a dual equivalence for $(\A,\Des)$, and $U \in
  \A$, then 
  \begin{equation}
    \sum_{T \in [U]} F_{\Des(T)}(X) \ = \ s_{\lambda}(X)
  \end{equation}
  for some partition $\lambda$. In particular, the quasisymmetric
  generating function for $\A$ is symmetric and Schur positive.
  \label{thm:positivity}
\end{theorem}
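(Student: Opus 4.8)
The plan is to reduce the general claim to the local structure already guaranteed by axiom (iv) of Definition~\ref{def:strong}, and then bootstrap from small windows of indices to the full class $\C$. The key observation is that $\sum_{T \in \C} F_{\Des(T)}(X)$ is manifestly quasisymmetric by construction; what must be shown is that it equals a single Schur function $s_\lambda$. The global equality will follow once I establish that the descent-set data of $\C$ matches the descent-set data of $\SYT(\lambda)$ for an appropriate $\lambda$ in a way that respects the fundamental quasisymmetric expansion~\eqref{e:quasi_schur}.

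First I would fix a dual equivalence class $\C$ and, for each window $(j,i)$ with $1 \le i-j \le 3$, invoke axiom (iv) to obtain partitions $\lambda_{(j,i)}$ with $\sum_{U \in [T]_{(j,i)}} F_{\Des_{(j,i)}(U)}(X) = s_{\lambda_{(j,i)}}(X)$. Since the restricted Schur function $s_{\lambda_{(j,i)}}$ is determined by its multiset of restricted descent sets $\{\Des_{(j,i)}(U)\}$, and since~\eqref{e:quasi_schur} says these descent sets are exactly $\{\Des_{(j,i)}(V) : V \in \SYT(\lambda_{(j,i)})\}$, I can match the local descent data of $\C$ against that of standard Young tableaux. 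The heart of the argument is then to show that these overlapping local isomorphisms can be glued: because axiom (ii) controls exactly how $\varphi_i$ changes $\Des(T)$ near position $i$ (flipping membership of $i-1,i$, possibly toggling $i-2$ or $i+1$, and fixing all positions with $|h-i|>1$ outside the immediate window), and axiom (iii) gives commutation for $|i-j|\ge 3$, the action of the full family $\{\varphi_i\}$ on descent sets is forced to agree with the action of the dual equivalence involutions $d_i$ on $\SYT$ for a single shape $\lambda$.

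The cleanest route is a comparison with Haiman's standard dual equivalence on $\SYT(\lambda)$, treated as the model $(\A_0,\Des_0)$. I would argue that the axioms pin down each class $\C$ up to a descent-preserving bijection with some $\SYT(\lambda)$: starting from any $T \in \C$, one reads off $\Des(T)$, uses axiom (iv) on consecutive windows to identify which $\varphi_i$ act nontrivially and how they alter descents, and shows inductively on $n$ that this reproduces the connected dual equivalence graph of a unique straight shape. Since~\eqref{e:classes} expresses $s_\lambda$ as the sum over exactly that class, substituting the established bijection yields $\sum_{T\in\C}F_{\Des(T)}(X)=s_\lambda(X)$. The final sentence of the theorem is then immediate: $\A$ is a disjoint union of its dual equivalence classes, so the generating function for $\A$ is a nonnegative sum of Schur functions, hence symmetric and Schur positive.

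The main obstacle I anticipate is the gluing step --- verifying that the local window partitions $\lambda_{(j,i)}$ are mutually consistent and assemble into one global $\lambda$. Axiom (iv) only guarantees Schur-positivity on windows of width at most four, so the real work is showing that the overlap conditions in axiom (ii) (especially the conditional toggling of $i-2$ and $i+1$ tied to fixed points of neighboring $\varphi$) together with the commutation in axiom (iii) leave no freedom: any two compatible window identifications must extend uniquely. This is precisely where the proof in \cite{AssafPAMS} does its heavy lifting, via the classification of dual equivalence graphs on windows of size up to four, and I would expect to cite or reproduce that local classification rather than reprove it from scratch.
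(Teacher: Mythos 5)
Your proposal follows essentially the same route as the paper: the paper states this theorem by citation to \cite{AssafPAMS}, and the proof there (mirrored in Section~\ref{s:shifted} for the shifted analog via Lemma~\ref{lem:axiom4}, Theorem~\ref{thm:cover}, Corollary~\ref{cor:fibers}, and Theorem~\ref{thm:isomorphic}) is precisely the structure-theorem argument you outline --- classify the local windows, glue inductively on $n$ into an isomorphism of the class $\C$ with $(\SYT(\lambda),\Des)$ under $\{d_i\}$, then conclude from \eqref{e:classes}. Your identification of the covering/gluing step as the technical core, to be taken from the local classification in \cite{AssafPAMS}, matches exactly where the cited proof does its work.
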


Our first goal, undertaken in Section~\ref{s:signed}, is to construct
dual equivalence involutions for the set $\siSYT(\gamma)$ of marked
standard shifted tableaux of shifted shape $\gamma$ with descent
function given by $\Des$. Our second goal, undertaken in
Section~\ref{s:shifted}, is to give a version of
Theorem~\ref{thm:positivity} that proves Schur $P$-positivity.

\section{Dual equivalence for marked tableaux}
\label{s:signed}

The objects for which we will construct a dual equivalence are the
marked standard tableaux, identified as shifted symmetric tableaux, with the associated descent function given by applying \eqref{e:des} to the reading word. Define the \emph{diagonal} of a cell to be its row index minus its column index. In particular, the main diagonal is zero, with the cells below positive and the reflected cells above negative.

\begin{definition}
  Let $S$ be a marked standard tableau. For $1<i<n$, let $a \leq b
  \leq c$ be the diagonals on which $i-1,i,i+1$
  reside. Then $\psi_i(w)$ is given by the following rule:
  \begin{itemize}
  \item if $i$ lies on diagonal $b$, then $\psi_i(S) = S$;
  \item else if $a=b$ (respectively $b=c$), then reflect the occupant
    on diagonal $c$ (respectively $a$) to lie on diagonal $-c$
    (respectively $-a$);
  \item else if $\left| |a|-|c| \right| = 1$, then reflect the
    occupant on diagonals $a$ and $c$ to lie on diagonal $-a$ and
    $-c$, respectively;
  \item else swap the occupants of diagonals $a$ and $c$.
  \end{itemize}
  \label{def:dual_shifted}
\end{definition}

For examples of $\psi$ on marked standard tableaux, see
Figure~\ref{fig:dual_shifted}. 

\begin{figure}[ht]
    \begin{displaymath}
      \begin{array}{c@{\hskip 3em}c@{\hskip 3em}c@{\hskip 3em}c}
      \rnode{t1}{\tableau{\cb \\ 2 & \cb & 3 \\ \cb & 1 & \cb & 4}} &
      \rnode{t2}{\tableau{\cb \\ \cb & \cb & 3 \\ \cb & 1 & 2 & 4}}         &
      \rnode{t3}{\tableau{\cb \\ \cb & \cb & 4 \\ \cb & 1 & 2 & 3}}         &
      \rnode{t4}{\tableau{\cb \\ 2 & \cb & 4 \\ \cb & 1 & \cb & 3}} \\[3\cellsize]
      \rnode{b1}{\tableau{4 \\ \cb & \cb & 3 \\ \cb & 1 & 2 & \cb}} &
      \rnode{b2}{\tableau{4 \\ 2 & \cb & 3 \\ \cb & 1 & \cb & \cb}} &
      \rnode{b3}{\tableau{3 \\ 2 & \cb & 4 \\ \cb & 1 & \cb & \cb}} &
      \rnode{b4}{\tableau{3 \\ \cb & \cb & 4 \\ \cb & 1 & 2 & \cb}} 
      \end{array}
      \psset{nodesep=2pt,linewidth=.1ex}
      \ncline{<->} {t1}{t2} \naput{\psi_2}
      \ncline{<->} {t2}{t3} \naput{\psi_3}
      \ncline[offset=2pt]{<->} {t4}{b4} \naput{\psi_2}
      \ncline[offset=2pt]{<->} {b4}{t4} \naput{\psi_3}
      \ncline{<->} {b1}{b2} \naput{\psi_2}
      \ncline{<->} {b2}{b3} \naput{\psi_3}
    \end{displaymath}
    \caption{\label{fig:dual_shifted}Dual equivalence for 
      $\siSYT(3,1)$.}
\end{figure}

\begin{proposition}
  The maps $\{\psi_i\}$ give well-defined involutions on the set of
  marked standard tableaux. Furthermore, if $S$ has no marked entries
  along the main diagonal, then neither does $\psi_i(S)$ for any $i$.
  \label{prop:well}
\end{proposition}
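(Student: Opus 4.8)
The plan is to analyze $\psi_i$ through the three cells $x,y,z$ of $S$ holding the consecutive values $i-1,i,i+1$, read in the order $a=x$, $b=y$, $c=z$, and to compare $\psi_i$ against the signed-permutation map $\varphi_i$ of Definition~\ref{def:dual_signed} applied to the reading word $w(S)$. The key preliminary observation is that, since a standard shifted tableau has all distinct absolute values, \emph{every} one of its $2^n$ sign patterns again yields a valid signed standard tableau: priming a strictly increasing sequence of integers keeps it weakly increasing in the order $1' < 1 < 2' < 2 < \cdots$, and the ``at most one $i$ per column, at most one $-i$ per row'' conditions hold automatically for distinct values. Consequently the two sign-changing branches of Definition~\ref{def:dual_shifted} (toggle the sign of $c$; swap the signs of $b,c$) always produce valid tableaux, and the only branch that can threaten well-definedness is the value-swap branch, which I would reduce to a statement about the underlying unsigned standard shifted tableau.

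For the value-swap branch I would show that exchanging the values $a$ and $c$, which are value-consecutive with $b$ value-extreme, preserves the standard shifted tableau structure exactly when $x$ and $z$ lie in distinct columns. Since $a$ and $c$ differ by one and no third value lies strictly between them, the only comparison that can be spoiled is the one between $x$ and $z$. If $x,z$ shared a row then, by reading order, all three cells would lie in that row, forcing $i\in\Spike(S)$ to coincide with ``exactly one of $b,c$ signed'' and thus placing us in the sign-swap branch rather than the value-swap branch; if $x,z$ share a column the swap reverses a strict column inequality, which is precisely the failure illustrated by the reading word $312$. Because Definition~\ref{def:dual_shifted} tests the same-column condition first and reroutes it to the sign toggle, the value-swap branch is reached only when $x,z$ occupy distinct rows and columns, where the exchange is valid. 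This is in essence Haiman's classical argument, with the shifted diagonal contributing the single extra same-column configuration.

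To prove that each $\psi_i$ is an involution I would check branch by branch that a second application reenters the same branch and undoes the first. The non-spike branch is the identity. Away from the same-column case, $\psi_i$ acts on $w(S)$ exactly as $\varphi_i$, so it inherits both the involution property and the preservation of $i\in\Spike$ from Theorem~\ref{thm:weak_pos}; one need only note that the geometric same-column test depends on cell positions, which are untouched by a sign swap and, in the value-swap branch, remain a pair of distinct columns upon re-reading. The same-column toggle branch is genuinely new: positions are unchanged, so $x,z$ stay in a common column and toggling the sign of $c$ twice is the identity; the one point requiring direct verification is that toggling the sign of $c$ keeps $i$ in $\Spike(S)$, which follows from examining the descent contributions of $i-1$ and $i$ in the same-column configuration, where $a$ lies strictly above $c$ in the column with $|a|>|c|$.

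Finally, for the main-diagonal statement I would use the characterization that a cell lies on the main diagonal if and only if it is the leftmost cell of its row, equivalently the top cell of its column. The value-swap branch leaves all signs in place and hence creates no diagonal sign. In the toggle branch the signed cell is $c$, which sits strictly below $a$ in a common column and so is not the top of its column, hence not diagonal. In the sign-swap branch the cell acquiring a sign is the unsigned member of $\{b,c\}$, and here I would verify two facts: a diagonal cell is never the last-read cell $c$ of a consecutive triple (for instance the global minimum $(1,1)$ is read before the value $2$, which must sit immediately to its right), and whenever the read-middle cell $b$ is diagonal the triple is in fact a same-column configuration, so the toggle branch takes precedence and signs the non-diagonal cell $c$ instead. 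I expect the main obstacle to be exactly this last geometric analysis: enumerating the relative positions and sign patterns of the three cells containing $i-1,i,i+1$ in a shifted standard tableau and confirming uniformly both that the value-swap is valid off the diagonal-induced same-column case and that no sign-changing branch ever deposits a sign on a leftmost-in-row cell.
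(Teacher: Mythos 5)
Your plan follows the same route as the paper's own proof: away from the same-column branch, $\psi_i$ acts on $w(S)$ exactly as $\varphi_i$, so involution and spike-preservation are inherited from Theorem~\ref{thm:weak_pos}; the same-column toggle is analyzed separately; and the diagonal statement is reduced to the same two geometric facts the paper establishes (the last-read cell of the triple is never on the diagonal, and a diagonal middle cell forces the same-column configuration, so the toggle takes precedence). On one point you are actually more explicit than the paper, namely in checking that the value-swap branch produces a valid tableau; the paper leaves this implicit.

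The gap is precisely the step you defer: the assertion that toggling the sign of $c$ keeps $i$ in $\Spike(S)$ whenever $a$ and $c$ share a column. That ``examination of descent contributions'' is not a routine check---it is the entire content of the paper's proof, which pins down the same-column configuration to $abc=(i+1)(i-1)(i)$ with $b$ immediately left of $c$, and then verifies all eight sign patterns. Your sketch never identifies the configuration, and the claim as you state it is in fact false, because there is a same-column configuration missed both by you and by the paper's dichotomy for the position of $b$: all three letters can be stacked vertically in one column, i.e. $abc=(i+1)(i)(i-1)$. Concretely, in the shape-$(3,2,1)$ standard shifted tableau with reading word $635124$, the values $4,5,6$ occupy column $3$; sign the $5$ only. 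Then $4\in\Des$ and $5\notin\Des$, so $5\in\Spike$, and Definition~\ref{def:dual_shifted} as written routes this to the toggle branch since $a=6$ and $c=4$ share a column; but toggling the sign of $c=4$ leaves neither $4$ nor $5$ a descent, the spike is destroyed, and the second application of $\psi_5$ is the identity rather than the inverse. In the vertical configuration one has $i\in\Spike$ if and only if exactly one of $b,c$ is signed, so this case must be captured by the sign swap (which does preserve the spike)---for instance by giving the ``exactly one of $b,c$ signed'' test precedence over the same-column test---before the eight-case check for $(i+1)(i-1)(i)$ can close the toggle-branch analysis. Since your proposal relies on the stated branch order and defers exactly this case analysis, it would run into the same obstruction; a correct completion must confront the vertical case explicitly.
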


\begin{proof}
  Clearly when $S$ is a fixed point for $\psi_i$, it remains so after
  doing nothing. Suppose, then, that this is not the case. There are
  three cases to consider based on how $\psi_i$ acts. A key
  observation in what follows is that two of $a,b,c$ are equal in
  absolute value if and only if both $i-1$ and $i+1$ have diagonal
  $0$. Indeed, if this is not the case then there is a cell southeast
  of the diagonal in question containing an entry $x$ that lies
  strictly between two of $i-1,i,i+1$. This forces $x=i$, so the
  entries of the two cells in question are $i-1$ and $i+1$. Moreover,
  if this is not the main diagonal, then there exists a cell northwest
  of the diagonal in question containing an entry $y \neq i$ with $i-1
  < y < i+1$, which is not possible.

  First, suppose that $a=b<c$. From the observation above, we must
  have $0 = a = b < c$, in which case $-c < a = b < c$. Therefore
  $\psi_i$ toggles the marking on the entry in $c$, and so is an
  involution for this case and the related case $a < b=c$.

  Second, suppose $a<b<c$ and $\left| |a| - |c| \right| = 1$. The
  latter constraint means that in the standard shifted tableau
  obtained by removing markings, the entries corresponding to $a$ and
  $c$ form a vertical or horizontal domino. In particular, the former
  assumption ensures that neither $a$ nor $c$ can be $0$. Therefore we
  have $a < 0 < c$, and so $-c < b < -a$. Again, $\psi_i$ preserves
  the case and acts as an involution.

  Finally, suppose $a<b<c$ and $\left| |a| - |c| \right| > 1$. In this
  case, $\psi_i$ clearly preserves the case, so we need only show that
  the result is a marked standard tableau. Since $i$ does not have
  diagonal $b$, the entries being swapped are consecutive, implying that
  the only potential row or column violation is with the two swapped
  entries. The latter condition above ensures that, in the standard
  shifted tableau obtained by removing the markings, the swapped
  entries do not lie in the same row or column, and so no violations
  can result.
\end{proof}

\begin{theorem}
  The maps $\{\psi_i\}_{1<i<n}$ give a dual equivalence for marked
  standard tableaux. In particular, the Schur $Q$- and $P$-functions
  are Schur positive.
  \label{thm:strong_pos}
\end{theorem}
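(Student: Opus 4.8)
The plan is to verify that the involutions $\{\psi_i\}$ satisfy the axioms of a weak dual equivalence (Definition~\ref{def:weak}). Granting this, Theorem~\ref{thm:weak-positivity} applied to the set of all signed standard tableaux of shape $\lambda$, whose fundamental generating function is $Q_{\lambda}$ (obtained by standardizing reading words exactly as in the proof of Proposition~\ref{prop:quasi_P}), yields Schur positivity of $Q_{\lambda}$. Restricting to the subset of tableaux with no signed entries on the main diagonal gives $P_{\lambda}$, and by the last assertion of Proposition~\ref{prop:well} this subset is closed under every $\psi_i$, hence a union of weak dual equivalence classes, so Schur positivity of $P_{\lambda}$ follows as well. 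Since Proposition~\ref{prop:well} already establishes that the $\psi_i$ are involutions, only conditions (i)--(iii) of Definition~\ref{def:strong} and (iv-a),(iv-b) of Definition~\ref{def:weak} remain.

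The engine of the argument is a descent-compatibility lemma: for every signed standard tableau $S$ and every $i$, I claim $\Des(\psi_i(S)) = \Des(\varphi_i(w(S)))$, and $\psi_i$ fixes $S$ if and only if $\varphi_i$ fixes $w(S)$. Away from the case where $a$ and $c$ lie in a common column, $\psi_i$ and $\varphi_i$ act identically on $w(S)$ by comparison of Definitions~\ref{def:dual_signed} and \ref{def:dual_shifted}, so there is nothing to prove. In the common-column case, Proposition~\ref{prop:well} pins down the local geometry as $abc = (i+1)(i-1)(i)$ with $a$ directly above $c$ and with $a,b$ on the main diagonal. I would then check the two sign subcases of $\varphi_i$ against the single sign-toggle performed by $\psi_i$, the point being that their discrepancy is invisible to the descent set: since $a$ sits directly above $c$ and the shape is shifted, the value $i+2$ never lies strictly between $a$ and $c$ in the reading word, so swapping the values $i$ and $i+1$ leaves the relative order of $i+1$ and $i+2$ unchanged; and since every cell carrying a value below $i-1$ lies in a row strictly below that of $b$, the value $i-2$ always follows $i-1$ in the reading word, so toggling the sign of $i-1$ cannot move the descent at $i-2$. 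Thus $\psi_i$ and $\varphi_i$ alter only the descents at $i-1$ and $i$, and in the same way.

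Granting the lemma, conditions (i), (ii), and (iii) transfer at once. Condition (iii) is local: $\psi_i$ modifies only the cells carrying $\{i-1,i,i+1\}$ together with the sign of $i$, so $\psi_i$ and $\psi_j$ commute whenever $|i-j|\ge 3$. Conditions (i) and (ii) are phrased purely in terms of descent sets and fixed-point membership, both of which agree with the corresponding data for $w(S)$ under $\varphi$ by the lemma; since $\{\varphi_i\}$ satisfies these by Theorem~\ref{thm:weak_pos}, so does $\{\psi_i\}$.

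The main obstacle is (iv-a) and (iv-b), which concern the restricted classes $[S]_{(j,i)}$ rather than single applications of the maps, and here the descent-compatibility lemma matches $\psi$ and $\varphi$ only one step at a time. Because $\psi_i$ and $\varphi_i$ depend on the whole reading word and not merely on its descent set, two tableaux with identical descent data may diverge under the next generator, so I cannot simply transport the $\varphi$-classes forward along $S\mapsto w(S)$. I would instead exploit locality: each restricted class is generated by $\psi_{i-2},\ldots,\psi_{i+1}$ and so depends only on the relative positions and signs of the six consecutive values $\{i-3,\ldots,i+2\}$, which occupy a shifted skew configuration on at most six cells. There are finitely many such configurations, and on each one the descent-compatibility lemma identifies the relevant restricted descent sets and fixed-point memberships with those of the associated length-six signed permutation, for which (iv-a) and (iv-b) were already verified in the proof of Theorem~\ref{thm:weak_pos}. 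The real work is in carrying out this reduction carefully---confirming that the restricted generating functions and the iterated fixed-point conditions of Definition~\ref{def:weak} genuinely depend only on the local skew configuration, and that the common-column modification does not disrupt the correspondence when the generators are composed.
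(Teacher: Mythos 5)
Your handling of the ``in particular'' statement is sound (weak dual equivalence plus Theorem~\ref{thm:weak-positivity} gives positivity of $Q_{\lambda}$, and the diagonal-preservation clause of Proposition~\ref{prop:well} makes the no-signs-on-the-diagonal subset a union of classes, giving $P_{\lambda}$), and your descent-compatibility lemma is true and transfers conditions (i)--(iii) correctly; this is in fact a somewhat cleaner route than the paper, which re-checks the middle cases of (ii) by hand in sixteen local cases rather than inheriting them from Theorem~\ref{thm:weak_pos}. One detail in the lemma needs a supplement: in the swap case of $\varphi_i$ the \emph{sign} of $i+1$ can change (it inherits whatever sign sat at $c$'s position), so preserving the relative order of $i+1$ and $i+2$ is not by itself enough for the descent at $i+1$; you also need that $i+2$ occurs after $c$ in the reading word. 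This does hold --- every entry right of $a$ in its row is at least $i+3$, and every entry in a higher row is larger still --- but it must be said.

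The genuine gap is the step ``the descent-compatibility lemma identifies the relevant restricted descent sets and fixed-point memberships with those of the associated length-six signed permutation, for which (iv-a) and (iv-b) were already verified.'' The lemma is a one-step statement: it gives $\Des(\psi_i(S)) = \Des(\varphi_i(w(S)))$, but in the common-column case $w(\psi_i(S)) \neq \varphi_i(w(S))$ (a sign toggle versus a value swap), and after this first divergence the lemma says nothing about $\psi_j\psi_i(S)$ versus $\varphi_j\varphi_i(w(S))$: the maps depend on the whole word, not on its descent set, so two words with equal descent sets can have images with different descent sets. Consequently the $\psi$-orbit of $S$ and the $\varphi$-orbit of $w(S)$ are genuinely different objects, there is no induced identification of restricted classes, of their descent multisets, or of the iterated fixed-point chains in (iv-b), and whether the two restricted generating functions agree is precisely what would have to be proven --- a task no easier than verifying (iv) outright. (Weak dual equivalence classes are only guaranteed Schur positive, not equal to prescribed Schur functions, so there is no abstract reason forcing agreement either.) This is exactly why the paper does not attempt the transfer: its proof of condition (iv) is a fresh finite verification, running through the $2^6\,6!$ signed words of length six under each possible assumption about which positions share a column, i.e.\ redoing the local check for the modified maps $\psi_i$ rather than citing the check for $\varphi_i$. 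Your locality reduction to finitely many skew configurations is the correct first move and matches the paper, but the finite check it leaves must then be carried out directly for $\psi$; the shortcut through Theorem~\ref{thm:weak_pos} does not close it.
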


\begin{proof}
  The definition of $\psi_i$ depends only on the positions of the
  entries $i-1,i,i+1$. If $|i-j| \geq 3$, then the relevant entries
  are disjoint, and so $\psi_i$ and $\psi_j$ must commute. Therefore
  by Definition~\ref{def:strong}, it is enough to show that the
  equivalence classes are single Schur functions for all marked
  standard tableaux of skew shape $\gamma \setminus \delta$, where
  $\delta \subset \gamma$ and $|\gamma|-|\delta| \leq 6$. From the
  definition of $\psi$, if two consecutive diagonals have no cells,
  then we may collapse them to one empty diagonal without changing the
  structure of the equivalence class. In particular, we may assume
  $\gamma \subset (11,9,7,5,3,1)$. More precisely, the number of skew
  shapes to check for $n=3,4,5,6$ is $10,31,98,307$, respectively. These cases can be (and have been) checked by a patient hand or by a computer.
\end{proof}

The first half of Theorem~\ref{thm:P_pos} now follows from
Theorems~\ref{thm:positivity} and \ref{thm:strong_pos}.

\begin{corollary}
  For $\gamma$ a strict partition, we have
  \begin{equation}
    P_{\gamma}(X) = \sum_{\lambda} g_{\gamma,\lambda} s_{\lambda}(X),
  \end{equation}
  where $g_{\gamma,\lambda}$ is the number of dual equivalence classes
  of marked standard tableaux of shape $\gamma$ under the action of
  $\{\psi_i\}$ that are isomorphic to $\SYT(\lambda)$. In particular,
  $g_{\gamma,\lambda} \in \mathbb{N}$.
  \label{cor:P_pos}
\end{corollary}

\section{Shifted dual equivalence graphs}
\label{s:shifted}

Haiman \cite{Haiman1992} also defined \emph{elementary shifted dual
  equivalence involutions} on permutations as follows. If $a,b$ are
two consecutive letters of the word $w$, $c$ is also consecutive with
$a,b$ and appears between $a$ and $b$ in $w$, and $d$ is also
consecutive with $a,b,c$ and appears left of $c$ in $w$, then
interchanging $a$ and $b$ is an elementary shifted dual equivalence
move. In this case, we again refer to $c$ as the \emph{witness}, and
we refer to $d$ as the \emph{bystander} for the shifted dual
equivalence interchanging $a$ and $b$. When $\{a,b,c,d\} =
\{i-1,i,i+1,i+2\}$, we denote this involution by $\sd_i$, and we
regard words with $c$ not between $a$ and $b$ or with $d$ right of $c$
as fixed points for $\sd_i$. This rule is illustrated in
Figure~\ref{fig:elementary_sh}.

\begin{figure}[ht]
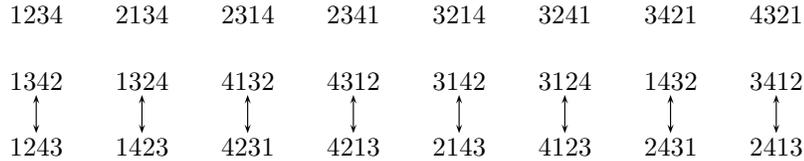

    \begin{displaymath}
      \begin{array}{c@{\hskip 2em}c@{\hskip 2em}c@{\hskip 2em}c@{\hskip 2em}c@{\hskip 2em}c@{\hskip 2em}c@{\hskip 2em}c}
        \rnode{f1}{1234} & \rnode{f2}{2134} & \rnode{f3}{2314} & \rnode{f4}{2341} & 
        \rnode{f5}{3214} & \rnode{f6}{3241} & \rnode{f7}{3421} & \rnode{f8}{4321} \\[3ex] 
        \rnode{t1}{1342} & \rnode{t2}{1324} & \rnode{t3}{4132} & \rnode{t4}{4312} & 
        \rnode{t5}{3142} & \rnode{t6}{3124} & \rnode{t7}{1432} & \rnode{t8}{3412} \\[3ex]        
        \rnode{b1}{1243} & \rnode{b2}{1423} & \rnode{b3}{4231} & \rnode{b4}{4213} & 
        \rnode{b5}{2143} & \rnode{b6}{4123} & \rnode{b7}{2431} & \rnode{b8}{2413}
      \end{array} 
      \everypsbox{\scriptstyle}
      \psset{nodesep=2pt,linewidth=.1ex}
      \ncline{<->} {t1}{b1} 
      \ncline{<->} {t2}{b2} 
      \ncline{<->} {t3}{b3} 
      \ncline{<->} {t4}{b4} 
      \ncline{<->} {t5}{b5} 
      \ncline{<->} {t6}{b6} 
      \ncline{<->} {t7}{b7} 
      \ncline{<->} {t8}{b8} 
    \end{displaymath}
    \caption{\label{fig:elementary_sh}The shifted dual equivalence
      classes of permutations of length $4$.}
\end{figure}

Haiman \cite{Haiman1992} showed that the shifted dual equivalence
involutions extend to standard shifted tableaux via their reading
words and that dual equivalence classes correspond precisely to all
standard shifted tableaux of a given shape. For examples, see
Figures~\ref{fig:classes_sh} and \ref{fig:SG7}.

\begin{figure}[ht]
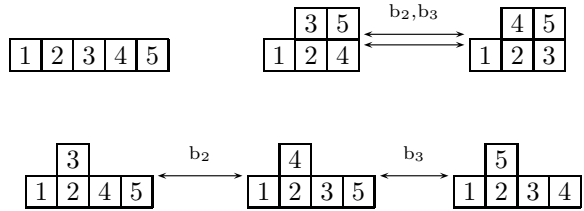

    \begin{displaymath}
      \begin{array}{c@{\hskip 2.5\cellsize}c@{\hskip 2.5\cellsize}c}
        \rnode{s1}{\tableau{\\ 1 & 2 & 3 & 4 & 5}} &
        \rnode{d1}{\tableau{& 3 & 5 \\ 1 & 2 & 4}} &
        \rnode{d2}{\tableau{& 4 & 5 \\ 1 & 2 & 3}} \\[3\cellsize]
        \rnode{t1}{\tableau{& 3 \\ 1 & 2 & 4 & 5}} &
        \rnode{t2}{\tableau{& 4 \\ 1 & 2 & 3 & 5}} &
        \rnode{t3}{\tableau{& 5 \\ 1 & 2 & 3 & 4}} 
      \end{array}
      \everypsbox{\scriptstyle}
      \psset{nodesep=2pt,linewidth=.1ex}
      \ncline[offset=2pt]{<->} {d1}{d2} \naput{\sd_2,\sd_3}
      \ncline[offset=2pt]{<->} {d2}{d1} 
      \ncline{<->} {t1}{t2} \naput{\sd_2}
      \ncline{<->} {t2}{t3} \naput{\sd_3}
    \end{displaymath}
    \caption{\label{fig:classes_sh}The shifted dual equivalence
      classes of $\shSYT$ of size $5$.}
\end{figure}

Comparing Figure~\ref{fig:classes_sh} with Figure~\ref{fig:classes},
it might seem that shifted dual equivalence classes are the same as
dual equivalence classes. However, shifted classes can have triple
edges, whereas dual equivalence classes can have at most double edges,
so the equality is an artifact of small numbers. To make this
statement precise, we introduce the notion of a morphism between
dual equivalences.

\begin{definition}
  Let $\A,\mathcal{B}$ be two sets of combinatorial objects, and let
  $\Des_A$ and $\Des_B$ be descent maps on each. Given involutions
  $\alpha_i$ on $\A$ and $\beta_i$ on $\mathcal{B}$, a \emph{morphism}
  from $(\A,\Des_A,\alpha)$ to $(\mathcal{B},\Des_B,\beta)$ is a map
  $\phi:\A \rightarrow \mathcal{B}$ such that for every $a \in \A$, we
  have $\Des_{A}(a) = \Des_{B}(\phi(a))$ and $\phi(\alpha_i(a)) =
  \beta_i(\phi(a))$. A morphism is an \emph{isomorphism} if it is a
  bijection from $\A$ to $\mathcal{B}$.
  \label{def:morphism}
\end{definition}

To avoid cumbersome notation, we omit the subscript for $\Des$ when it
is clear from context.

Since dual equivalence pertains to descent sets and shifted dual
equivalence pertains to peak sets, we also need to shift a peak set as
follows. Given a subset $D$ of positive integers greater than $1$, let
$D-1$ be the subset obtained by subtracting $1$ from each element of
$D$.

\begin{proposition}
  For nonnegative integers $r>s$, the shifted dual equivalence for
  $(\shSYT((r,s)), \Peak-1)$ given by $\{\sd_i\}$ is isomorphic to the
  dual equivalence on $(\SYT((r-1,s)), \Des)$ given by $\{d_i\}$. For
  $\gamma$ a strict partition with more than $2$ parts, the shifted
  dual equivalence on $(\shSYT(\gamma), \Peak-1)$ given by
  $\{\sd_i\}$ is not isomorphic to $(\SYT(\lambda),\Des)$ given by
  $\{d_i\}$ for any partition $\lambda$.
\end{proposition}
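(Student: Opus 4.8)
The plan is to treat the two claims separately: the first by constructing an explicit isomorphism, the second by exhibiting a graph-theoretic invariant (the presence of a ``triple edge'') that the shifted structure carries and that the ordinary one never does.

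For the first claim I would define a map $\phi \colon \shSYT((r,s)) \to \SYT((r-1,s))$ as follows. Since the value $1$ always occupies the corner cell $(1,1)$, the unique cell in the first column of the shifted diagram of $(r,s)$, I delete this cell, subtract $1$ from every remaining entry, and left-justify the two rows; as $r>s$ forces $r-1\ge s$, the result is a genuine element of $\SYT((r-1,s))$, and the construction is inverted by incrementing, shifting right, and reinserting a $1$ in a fresh corner, so $\phi$ is a bijection. The identity $\Des(\phi(S)) = \Peak(S)-1$ is the heart of this part. First I would observe that in a two-row shifted tableau the value $2$ cannot lie at $(2,2)$ (that would force $1<(1,2)<2$), so $2$ sits in row $1$ and $1\notin\Des(S)$; moreover $i$ and $i+1$ cannot both be descents, since that would place $i,i+1,i+2$ in three strictly increasing rows, impossible with two rows. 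Thus $\Des(S)$ avoids $1$ and contains no consecutive pair, whence $\Peak(S)=\Des(S)$, and because $\phi$ decrements entries one checks $\Des(\phi(S))=\Des(S)-1$. Finally, to see $\phi(\sd_i(S))=d_i(\phi(S))$ I would use that $\sd_i$ depends only on the relative order of $\{i-1,i,i+1,i+2\}$ in $w(S)$ and run a finite case analysis over these orders, using that $1$, always read in the corner, makes the auxiliary ``$d$ left of $c$'' clause of $\sd_i$ reduce to the ordinary $d_i$ rule on the decremented values.

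For the second claim I would isolate the invariant hinted at in the text: call a pair $T\ne U$ a \emph{triple edge} if $U=\sd_i(T)=\sd_{i+1}(T)=\sd_{i+2}(T)$ for some $i$, and analogously for $d$. Since any morphism satisfies $\phi(\alpha_i(a))=\beta_i(\phi(a))$ for each fixed index $i$, an isomorphism carries triple edges to triple edges; it therefore suffices to show that ordinary dual equivalence graphs have no triple edge while every $\shSYT(\lambda)$ with at least three parts has one. For the negative direction, each $d_j$ transposes two values of $\{j-1,j,j+1\}$; if $U=d_i(T)=d_{i+1}(T)$ the two transpositions agree, and the only $2$-element set common to $\{i-1,i,i+1\}$ and $\{i,i+1,i+2\}$ is $\{i,i+1\}$, which is not contained in $\{i+1,i+2,i+3\}$, so $U=d_{i+2}(T)$ is impossible. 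For the positive direction, a strict $\lambda$ with at least three parts satisfies $\lambda_1>\lambda_2>\lambda_3\ge 1$, so the bottom-left shifted staircase $\delta=\{(1,1),(1,2),(1,3),(2,2),(2,3),(3,3)\}$ is present and is an order ideal of the shifted diagram. Filling $\delta$ by $(1,1),(1,2),(1,3),(2,2),(2,3),(3,3)\mapsto 1,2,4,3,5,6$ (one of the two standard shifted tableaux of shape $(3,2,1)$) and filling $\lambda/\delta$ with $7,\dots,n$ in any standard way produces a tableau $T$ whose reading word, restricted to $\{1,\dots,6\}$, is $635124$; one then verifies directly that $\sd_2,\sd_3,\sd_4$ each swap the entries $3$ and $4$, giving a common image $U$ and hence a triple edge.

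The main obstacle I anticipate is the intertwining step $\phi\circ\sd_i=d_i\circ\phi$ of the first claim: pinning down the exact index alignment (that $\sd_i$ matches $d_i$ rather than $d_{i-1}$ after the entry-shift) requires tracking carefully how the corner entry $1$ renders the extra condition in the definition of $\sd_i$ vacuous, and this is where the case analysis must be executed with care. By contrast, the triple-edge argument for the second claim is robust: once the pattern $635124$ is seated in the staircase corner, the involutions $\sd_2,\sd_3,\sd_4$ see only the values $1,\dots,6$, so the verification is identical for every admissible $\lambda$.
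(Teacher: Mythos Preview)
Your argument is correct and follows the same route as the paper's proof: the identical delete-$1$-and-decrement bijection $\phi$ for the two-row case, and the same triple-edge obstruction seated in the $(3,2,1)$ staircase for the second claim. The only cosmetic differences are that the paper uses the partner tableau with reading word $645123$ (the $\sd$-image of your $635124$) and rules out triple edges on the $\SYT$ side by invoking condition~(ii) of Definition~\ref{def:strong} rather than your direct transposition-overlap argument.
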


\begin{proof}
  Consider the map $\phi$ from $\shSYT((r,s))$ to $\SYT((r-1,s))$
  given by removing the cell containing $1$, subtracting $1$ from each
  entry. On the level of sets, $\phi$ is clearly a bijection. One
  easily checks that, in addition, $\Peak(T)-1 = \Des(\phi(T))$, and
  $\phi(\sd_{i+1}(T)) = d_i(\phi(T))$. Therefore $\phi$ is an
  isomorphism of dual equivalences.

  The shifted tableau $T$ of shape $\gamma = (3,2,1)$ with reading
  word $645123$ has $\sd_2 = \sd_3 = \sd_4$. Any strict partition with
  at least $3$ parts must contain $\gamma$, and so it contains an
  element that restricts to $T$. In particular, such an element has
  $\sd_2 = \sd_3 = \sd_4$, and so the equivalence cannot be isomorphic
  to any dual equivalence on standard Young tableaux.
\end{proof}

Completely analogous to the unshifted case, for
$\gamma$ a strict partition, we may rewrite \eqref{e:quasi_PH} in
terms of dual equivalence classes as
\begin{eqnarray}
  Q_{\gamma}(X) & = & \sum_{T \in [T_{\gamma}]} 2^{|\Peak(T)|+1} G_{\Peak(T)}(X)
  \label{e:classes_shQ} \\
  P_{\gamma}(X) & = & 2^{-\ell(\gamma)} \sum_{T \in [T_{\gamma}]} 2^{|\Peak(T)|+1} G_{\Peak(T)}(X),
  \label{e:classes_sh}
\end{eqnarray}
where $[T_{\gamma}]$ denotes the shifted dual equivalence class of
some fixed $T_{\gamma} \in \shSYT(\gamma)$. 

By \eqref{e:classes_sh}, shifted dual equivalence classes of standard
shifted tableaux precisely correspond to Schur $Q$-functions or Schur
$P$-functions, depending on the chosen scaling.  Following the
analogy, our goal is to use this paradigm shift to summing over
objects in a shifted dual equivalence class to give a universal method
for proving that a quasisymmetric generating function is symmetric and
Schur $Q$-positive or Schur $P$-positive.

Since the subset statistic for this case is the peak set instead of
the descent set, we make the following notation. Given involutions $\varphi_2, \ldots, \varphi_{n-2}$, for $1 < j < i <
n-1$ on a set $\A$ of combinatorial objects, we consider the restricted shifted dual equivalence classes
$[T]_{(j,i)}$ generated by $\varphi_j,\ldots,\varphi_i$. In addition,
we consider the restricted and shifted peak sets $\Peak_{(j,i)}(T)$
obtained by intersecting $\Peak(T)$ with $\{j-1,\ldots,i+1\}$ and
subtracting $j-2$ from each element so that $\Peak_{(j,i)}(T)
\subseteq [i-j+1]$.

\begin{definition}
  Let $\A$ be a finite set, and let $\Peak$ be a peak map on $\A$ such
  that $\Peak(T) \subseteq \{2,\ldots,n-1\}$ with no consecutive
  entries for all $T \in \A$. A \emph{shifted dual equivalence for
    $(\A,\Peak)$} is a family of involutions $\{\varphi_i\}_{1<i<n-1}$
  on $\A$ such that

  \renewcommand{\theenumi}{\roman{enumi}}
  \begin{enumerate}
  \item For all $|i-j| \leq 5$ and all $T \in \mathcal{A}$, there
    exists a strict partition $\gamma$ of $|i-j|+4$ such that
    \[ \sum_{U \in [T]_{(j,i)}} 2^{|\Peak_{(j,i)}(U)|+1} G_{\mathrm{Peak}_{(j,i)}(U)}(X) = Q_{\gamma}(X). \] 
    
  \item For all $|i-j| \geq 4$ and all $T \in\mathcal{A}$, we have
    \begin{displaymath}
      \varphi_{j} \varphi_{i}(T) = \varphi_{i} \varphi_{j}(T).
    \end{displaymath}

  \end{enumerate}

  \label{def:strong_sh}
\end{definition}

Definition~\ref{def:strong_sh} completely characterizes shifted dual
equivalence classes in the same sense that Definition~\ref{def:strong}
characterizes dual equivalence classes. 

\begin{proposition}
  For $\gamma$ a strict partition of $n$, the involutions
  $\{\sd_i\}_{1<i<n-1}$ give a shifted dual equivalence for
  $\shSYT(\gamma)$ with peak function given by \eqref{e:peaks}. 
  \label{prop:shifted_DEG}
\end{proposition}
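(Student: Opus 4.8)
The plan is to verify that the four conditions of Definition~\ref{def:strong_sh} hold for the involutions $\{\sd_i\}$ acting on $\shSYT(\lambda)$. The overall strategy mirrors the proofs of Theorems~\ref{thm:weak_pos} and~\ref{thm:strong_pos}: conditions (i)--(iii) are local and follow from the combinatorial description of $\sd_i$ in terms of the relative positions of $\{i-1,i,i+1,i+2\}$ in the reading word, while condition (iv) is the genuinely substantive one and should be reducible to a finite verification. First I would establish (i): a standard shifted tableau $T$ is fixed by $\sd_i$ precisely when the swap is not triggered, i.e.\ when $c$ is not between $a$ and $b$ or $d$ lies right of $c$; I would check that this happens exactly when $i,i+1 \notin \Peak(T)$, using the characterization of peaks in terms of the order of consecutive values $(i-1)(i+1)(i)$ or $(i+1)(i-1)(i)$ in $w(T)$ that appears in the proof of Proposition~\ref{prop:quasi_PH}. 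Since $\sd_i$ swaps $a$ and $b$ (the extreme pair of $\{i-1,i,i+1,i+2\}$ in value), it interchanges a peak at $i$ with a peak at $i+1$, which is exactly the first line of (ii); the second line of (ii) follows because $\sd_i$ only permutes the entries $\{i-1,i,i+1,i+2\}$ and hence cannot affect whether $h \in \Peak(T)$ for $h \le i-2$ or $h \ge i+3$.

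For condition (iii), I would observe that $\sd_i$ depends only on the positions of the values in $\{i-1,i,i+1,i+2\}$, so when $|i-j| \ge 4$ the sets $\{i-1,i,i+1,i+2\}$ and $\{j-1,j,j+1,j+2\}$ are disjoint and the two involutions act on disjoint data, giving commutativity. The one subtlety here, which I would need to confirm, is that applying $\sd_i$ never disturbs the relative positions of the values involved in $\sd_j$; since $\sd_i$ merely transposes two cells whose values lie in its own window, and the resulting filling is again a legitimate standard shifted tableau (this is Haiman's result), the positions relevant to $\sd_j$ are unchanged. That the maps are genuine well-defined involutions on $\shSYT(\lambda)$ — rather than on arbitrary words — is exactly Haiman's theorem, which I may cite.

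The hard part will be condition (iv): for every $T$ and every window width $1 \le i-j \le 4$, the restricted generating function $\sum_{U \in [T]_{(j,i)}} G_{\Peak_{(j,i)}(U)}(X)$ must equal a single Schur $P$-function $P_\lambda(X)$. The natural approach is to argue that this is a \emph{finite} check: the restricted classes $[T]_{(j,i)}$ are generated by at most five consecutive involutions $\sd_j,\ldots,\sd_i$, and since each $\sd_k$ only reshuffles values in a bounded window, the restricted peak data of a class is determined by the restriction of the reading word to the relevant consecutive-value interval. Thus it suffices to enumerate shifted dual equivalence classes of standard shifted tableaux on a bounded number of cells — concretely, permutations and shifted standard tableaux of size at most $i-j+3 \le 7$ — and confirm directly that each such class's peak generating function is a Schur $P$-function, via the identification in~\eqref{e:classes_sh}. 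The delicacy is ensuring that the restriction operation $\Peak \mapsto \Peak_{(j,i)}$ and the class restriction $[T] \mapsto [T]_{(j,i)}$ interact correctly, so that a restricted class genuinely corresponds to the shifted dual equivalence class of a smaller standard shifted tableau; once that correspondence is set up, \eqref{e:classes_sh} delivers the required equality with no further computation, and the proposition follows from the base case of Figure~\ref{fig:SG7} together with the finite enumeration.
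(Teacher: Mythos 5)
Your handling of conditions (i)--(iii) is essentially the paper's own argument: the paper verifies (i) and the first line of (ii) by checking the length-$4$ permutations of Figure~\ref{fig:elementary_sh}, and obtains the second line of (ii) and condition (iii) from the same locality observation you make (that $\sd_i$ moves only the values $i-1,i,i+1,i+2$, while membership of $h$ in $\Peak$ depends only on $h-1,h,h+1$). One correction there: $\sd_i$ does not swap ``the extreme pair of $\{i-1,i,i+1,i+2\}$ in value.'' By definition it swaps a pair $a,b$ that are \emph{consecutive in value} (which, depending on the configuration, may be $(i-1,i)$, $(i,i+1)$, or $(i+1,i+2)$), so your one-line justification of the peak-interchange property is not right as stated; the safe route is exactly the finite check on length-$4$ words that the paper performs.

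The genuine gap is in condition (iv). The paper disposes of it in one line (``Condition (iv) holds by definition''), resting on Haiman's theorem, cited earlier in the section, together with \eqref{e:classes_sh}. Your proposal instead reduces (iv) to a finite enumeration of classes of standard shifted tableaux of size at most $7$, on the strength of the claim that ``a restricted class genuinely corresponds to the shifted dual equivalence class of a smaller standard shifted tableau.'' That claim is false as stated, and it is precisely where the content lies: in $[T]_{(j,i)}$ the entries $1,\ldots,j-2$ are frozen and occupy a straight shifted shape $\nu$, so the moving window of values $j-1,\ldots,i+2$ forms a standard shifted tableau of \emph{skew} shape $\mu/\nu$, not a straight shape. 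Since $\nu$ varies with $\lambda$, $T$, and the window, these skew shapes form an unbounded family; \eqref{e:classes_sh}, which is a statement about straight shapes, does not apply to them directly, and enumerating straight-shape tableaux of size at most $7$ cannot substitute. What is needed is the skew part of Haiman's theory: shifted jeu de taquin slides commute with the involutions $\sd_i$, rectify each skew class bijectively onto $\shSYT(\mu)$ for some straight strict $\mu$, and do so compatibly with the peak statistic, after which \eqref{e:classes_sh} finishes the argument with no enumeration at all. You flag this correspondence as ``the delicacy'' but offer no argument for it, so as written your verification of (iv) does not go through; once that correspondence is supplied, the finite check you propose becomes superfluous.
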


\begin{proof}
  Condition (i) holds by \eqref{e:classes_sh}, and condition (ii)
  follows from the fact that $\sd_i$ depends only on the relative
  positions of $i-1,i,i+1,i+2$ and for $|i-j| \geq 4$ these sets are
  disjoint.
\end{proof}

The real purpose of Definition~\ref{def:strong_sh} is to establish the
following analog of Theorem~\ref{thm:positivity}.

\begin{theorem}
  If $\{\varphi_i\}$ is a shifted dual equivalence for $(\A,\Des)$ and
  $U \in \A$, then
  \begin{equation}
    \sum_{T \in [U]} 2^{|\Peak(T)|+1} G_{\Peak(T)}(X) \ = \ Q_{\gamma}(X)
  \end{equation}
  for some strict partition $\gamma$. In particular, the
  quasisymmetric generating function for $\A$ is symmetric and Schur
  $Q$-positive.
  \label{thm:positivity_sh}
\end{theorem}

We prove Theorem~\ref{thm:positivity_sh} along the same lines as the
structure theorem for dual equivalence given in
\cite{Assaf-DEGs-I}. To begin, we show that the shifted dual
equivalences for standard shifted tableaux are pairwise nonisomorphic
with no nontrivial automorphisms. This is the shifted analog of
\cite{Assaf-DEGs-I}[Proposition~3.9].

\begin{figure}[ht]
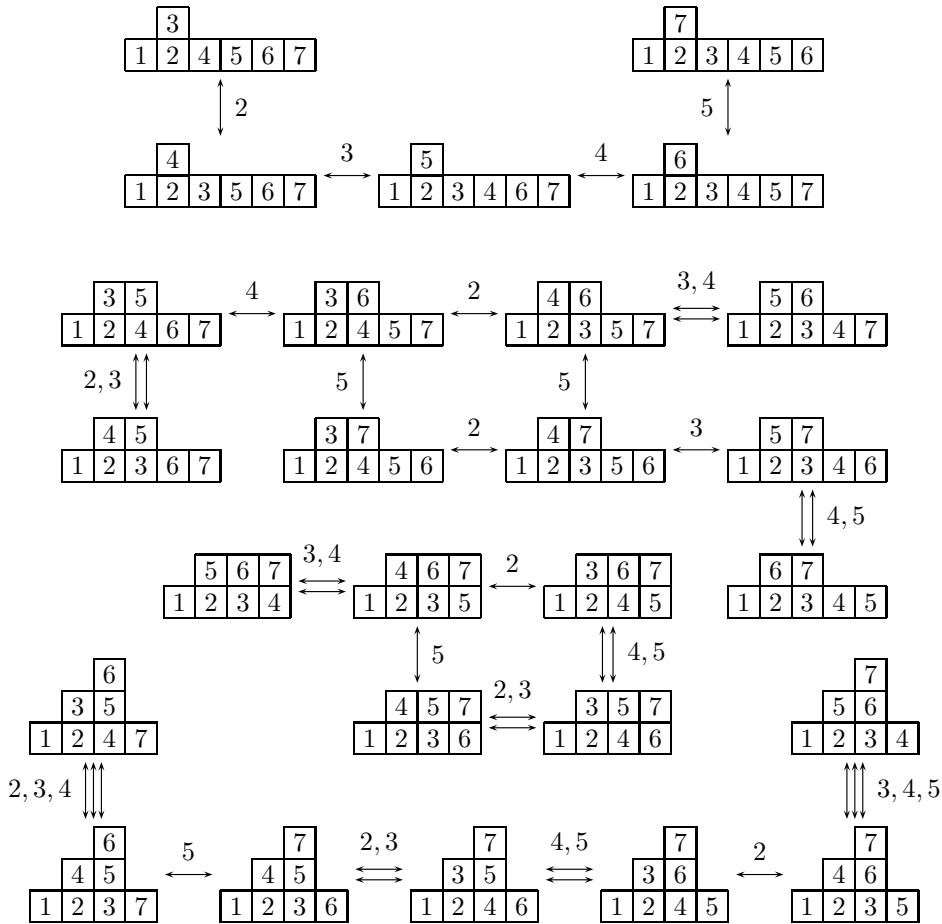

  \begin{displaymath}
    \begin{array}{c}
      \begin{array}{c@{\hskip 2\cellsize}c@{\hskip 2\cellsize}c}
        \rnode{a1}{\tableau{& 3 \\ 1 & 2 & 4 & 5 & 6 & 7}} & & 
        \rnode{a5}{\tableau{& 7 \\ 1 & 2 & 3 & 4 & 5 & 6}} \\[3\cellsize]
        \rnode{a2}{\tableau{& 4 \\ 1 & 2 & 3 & 5 & 6 & 7}} & 
        \rnode{a3}{\tableau{& 5 \\ 1 & 2 & 3 & 4 & 6 & 7}} &
        \rnode{a4}{\tableau{& 6 \\ 1 & 2 & 3 & 4 & 5 & 7}} 
      \end{array} \\[5\cellsize]
      \begin{array}{c@{\hskip 2\cellsize}c@{\hskip 2\cellsize}c@{\hskip 2\cellsize}c}
        \rnode{b8}{\tableau{& 3 & 5\\ 1 & 2 & 4 & 6 & 7}} &
        \rnode{b6}{\tableau{& 3 & 6\\ 1 & 2 & 4 & 5 & 7}} &
        \rnode{b5}{\tableau{& 4 & 6\\ 1 & 2 & 3 & 5 & 7}} &
        \rnode{b7}{\tableau{& 5 & 6\\ 1 & 2 & 3 & 4 & 7}} \\[3\cellsize]
        \rnode{b9}{\tableau{& 4 & 5\\ 1 & 2 & 3 & 6 & 7}} &
        \rnode{b4}{\tableau{& 3 & 7\\ 1 & 2 & 4 & 5 & 6}} &
        \rnode{b3}{\tableau{& 4 & 7\\ 1 & 2 & 3 & 5 & 6}} &
        \rnode{b2}{\tableau{& 5 & 7\\ 1 & 2 & 3 & 4 & 6}} \\[3\cellsize]
        & & & \rnode{b1}{\tableau{& 6 & 7\\ 1 & 2 & 3 & 4 & 5}}
      \end{array} \\[-2\cellsize]
      \begin{array}{c@{\hskip 2\cellsize}c@{\hskip 2\cellsize}c@{\hskip 4\cellsize}}
        \rnode{c1}{\tableau{& 5 & 6 & 7 \\ 1 & 2 & 3 & 4}} &
        \rnode{c2}{\tableau{& 4 & 6 & 7 \\ 1 & 2 & 3 & 5}} &
        \rnode{c3}{\tableau{& 3 & 6 & 7 \\ 1 & 2 & 4 & 5}} \\[3\cellsize] &
        \rnode{c4}{\tableau{& 4 & 5 & 7 \\ 1 & 2 & 3 & 6}} &
        \rnode{c5}{\tableau{& 3 & 5 & 7 \\ 1 & 2 & 4 & 6}}
      \end{array} \\[-3\cellsize]
      \begin{array}{c@{\hskip 2\cellsize}c@{\hskip 2\cellsize}c@{\hskip 2\cellsize}c@{\hskip 2\cellsize}c}
        \rnode{d7}{\tableau{& & 6 \\ & 3 & 5 \\ 1 & 2 & 4 & 7}} & & & &
        \rnode{d1}{\tableau{& & 7 \\ & 5 & 6 \\ 1 & 2 & 3 & 4}} \\[4\cellsize]
        \rnode{d6}{\tableau{& & 6 \\ & 4 & 5 \\ 1 & 2 & 3 & 7}} &
        \rnode{d5}{\tableau{& & 7 \\ & 4 & 5 \\ 1 & 2 & 3 & 6}} &
        \rnode{d4}{\tableau{& & 7 \\ & 3 & 5 \\ 1 & 2 & 4 & 6}} &
        \rnode{d3}{\tableau{& & 7 \\ & 3 & 6 \\ 1 & 2 & 4 & 5}} &
        \rnode{d2}{\tableau{& & 7 \\ & 4 & 6 \\ 1 & 2 & 3 & 5}} 
      \end{array} 
    \end{array}
    \psset{nodesep=3pt,linewidth=.1ex}
    \ncline{<->} {a1}{a2} \naput{2}
    \ncline{<->} {a2}{a3} \naput{3}
    \ncline{<->} {a3}{a4} \naput{4}
    \ncline{<->} {a4}{a5} \naput{5}
    \ncline[offset=2pt]{<->} {b9}{b8} \naput{2,3}
    \ncline[offset=2pt]{<->} {b8}{b9}
    \ncline{<->} {b8}{b6} \naput{4}
    \ncline[offset=2pt]{<->} {b5}{b7} \naput{3,4}
    \ncline[offset=2pt]{<->} {b7}{b5} 
    \ncline{<->} {b6}{b5} \naput{2}
    \ncline{<->} {b4}{b6} \naput{5}
    \ncline{<->} {b3}{b5} \naput{5}
    \ncline{<->} {b4}{b3} \naput{2}
    \ncline{<->} {b3}{b2} \naput{3}
    \ncline[offset=2pt]{<->} {b1}{b2} 
    \ncline[offset=2pt]{<->} {b2}{b1} \naput{4,5}
    \ncline[offset=2pt]{<->} {c1}{c2} \naput{3,4}
    \ncline[offset=2pt]{<->} {c2}{c1} 
    \ncline{<->} {c2}{c3} \naput{2}
    \ncline{<->} {c2}{c4} \naput{5}
    \ncline[offset=2pt]{<->} {c3}{c5} \naput{4,5}
    \ncline[offset=2pt]{<->} {c5}{c3} 
    \ncline[offset=2pt]{<->} {c4}{c5} \naput{2,3}
    \ncline[offset=2pt]{<->} {c5}{c4} 
    \ncline[offset=3pt]{<->} {d7}{d6} 
    \ncline{<->} {d7}{d6} 
    \ncline[offset=3pt]{<->} {d6}{d7} \naput{2,3,4}
    \ncline{<->} {d6}{d5} \naput{5}
    \ncline[offset=2pt]{<->} {d5}{d4} \naput{2,3}
    \ncline[offset=2pt]{<->} {d4}{d5} 
    \ncline[offset=2pt]{<->} {d4}{d3} \naput{4,5}
    \ncline[offset=2pt]{<->} {d3}{d4} 
    \ncline{<->} {d3}{d2} \naput{2}
    \ncline[offset=3pt]{<->} {d2}{d1} 
    \ncline{<->} {d2}{d1} 
    \ncline[offset=3pt]{<->} {d1}{d2} \naput{3,4,5}
  \end{displaymath}
\caption{\label{fig:SG7}The standard shifted dual equivalences
  of size $7$.}
\end{figure}

\begin{proposition} 
  For $\gamma,\delta$ strict partitions, if $\phi: \shSYT(\gamma)
  \rightarrow \shSYT(\delta)$ is an isomorphism of shifted dual
  equivalences, then $\gamma = \delta$ and $\phi=\mathrm{id}$.
\label{prop:noauto-noniso}
\end{proposition}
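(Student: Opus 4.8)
The plan is to prove the two assertions in turn: first that $\lambda=\mu$, and then that $\phi$ is forced to be the identity. The single fact driving both parts is that an isomorphism of shifted dual equivalences preserves the peak statistic, so $\Peak(\phi(T)) = \Peak(T)$ for every $T$. For the shape, I would observe that, $\phi$ being a bijection preserving $\Peak$, the two multisets $\{\Peak(T) : T \in \shSYT(\lambda)\}$ and $\{\Peak(S) : S \in \shSYT(\mu)\}$ coincide. Summing the associated shifted fundamental quasisymmetric functions and applying \eqref{e:classes_sh} then gives $P_{\lambda}(X) = P_{\mu}(X)$, and since distinct strict partitions index linearly independent Schur $P$-functions, this forces $\lambda=\mu$.

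For $\phi=\mathrm{id}$ I would argue by induction on $n=|\lambda|$, the base cases $n \le 3$ being immediate since each strict shape then carries a unique standard shifted tableau. For the inductive step, the key idea is to restrict attention to the involutions $\sd_2,\ldots,\sd_{n-3}$, which fix the position of the largest entry $n$ because each $\sd_i$ with $i\le n-3$ rearranges only the letters $i-1,i,i+1,i+2 \le n-1$. Consequently the connected components of $\shSYT(\lambda)$ under these involutions are indexed by the outer corner $c$ occupied by $n$: deleting that cell and retaining the entries $1,\dots,n-1$ identifies the component having $n$ in corner $c$ with $\shSYT(\mu_c)$, where $\mu_c=\lambda\setminus c$ is a strict partition of $n-1$. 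Under this identification the maps $\sd_2,\ldots,\sd_{n-3}$ become exactly the shifted dual equivalence involutions on $\shSYT(\mu_c)$, while the restricted peak set $\Peak(T)\cap\{2,\ldots,n-2\}$ becomes the peak set of the smaller tableau.

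Since $\phi$ commutes with $\sd_2,\ldots,\sd_{n-3}$, it permutes these components, and since it preserves peak sets it induces a peak-preserving bijection from the corner-$c$ component onto its image, a corner-$c'$ component. Repeating the first-part argument at the level of components yields $P_{\mu_c}=P_{\mu_{c'}}$, hence $\mu_c=\mu_{c'}$; because distinct removable corners of a strict partition yield distinct subshapes, we conclude $c=c'$, so $\phi$ fixes every component setwise. On each fixed component $\phi$ is then an automorphism of the shifted dual equivalence on $\shSYT(\mu_c)$ preserving peaks, so by the induction hypothesis it is the identity there. As every tableau lies in some component, $\phi=\mathrm{id}$.

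The main obstacle is making the branching rigorous: verifying that restricting to $\sd_2,\ldots,\sd_{n-3}$ really splits $\shSYT(\lambda)$ into components that are isomorphic, as shifted dual equivalences \emph{with their peak statistics}, to the smaller classes $\shSYT(\mu_c)$ --- that is, that the corner occupied by $n$ is a complete invariant of a component and that both the involutions and the peaks restrict correctly. Once this branching is in place, the partition-distinguishing step and the induction are routine, and the only delicate checking is confined to the interaction of the reading-word definition of $\sd_i$ with deletion of the cell containing $n$.
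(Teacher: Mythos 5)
Your proposal is correct and takes essentially the same approach as the paper, which proves this proposition only by citing the completely analogous argument for Proposition 3.11 of the dual equivalence graphs paper: distinguish shapes via the quasisymmetric generating function (here $P_{\lambda}$, using linear independence of Schur $P$-functions), then establish rigidity by induction on $n$, restricting to the involutions $\sd_2,\ldots,\sd_{n-3}$ that fix the position of $n$ so that components branch over removable corners and are identified with $\shSYT(\lambda\setminus c)$. The technical verification you flag as the main obstacle --- that deleting the cell containing $n$ intertwines the restricted involutions and restricted peak sets --- is exactly the content that the phrase ``completely analogous'' elides, and your sketch of it (relative order in the reading word is unchanged, and peaks at positions at most $n-2$ depend only on entries at most $n-1$) is sound.
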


\begin{proof}
  Let $T_{\gamma}$ be the standard shifted tableau obtained by filling
  the numbers 1 through $n$ into the rows of $\gamma$ from left to
  right, bottom to top. For any standard shifted tableau $T$ such that
  $\Peak(T) = \Peak(T_{\gamma})$, the numbers $1$ through $\lambda_1$
  must form a horizontal strip since $1\not\in\Des(T_{\gamma})$ and
  $\lambda_1$ is the smallest element of
  $\Peak(T_{\gamma})$. Similarly, $\lambda_1 + 1$ through $\lambda_1 +
  \lambda_2$ must form a horizontal strip, and so on.  In particular,
  if $\Peak(T) = \Peak(T_{\gamma})$ for some $T$ of shape $\delta$, then
  $\gamma \leq \delta$ in dominance order with equality if and only if $T = T_{\gamma}$.

  Suppose $\phi : \shSYT(\gamma) \rightarrow \shSYT(\delta)$ is an
  isomorphism. Let $T_{\gamma}$ be as above for $\gamma$, and let
  $T_{\delta}$ be the corresponding tableau for $\delta$. Then since
  $\Peak(\phi(T_{\gamma})) = \Peak(T_{\gamma})$, $\gamma \leq
  \delta$. Conversely, since $\Peak(\phi^{-1}(T_{\delta})) =
  \Peak(T_{\delta})$, $\delta \leq \gamma$.  Therefore $\gamma=\delta$.
  Furthermore, $\phi(T_{\gamma}) = T_{\gamma}$. For $T \in
  \shSYT(\gamma)$ such that $\sd_i(T_{\gamma}) = T$, we have
  $\phi(T) = \sd_i(T_{\gamma}) = T$.  Extending this, every shifted
  tableau connected to a fixed point by some sequence of elementary
  shifted dual equivalences is also a fixed point for $\phi$, hence
  $\phi = \mathrm{id}$ on each $\shSYT(\gamma)$.
\end{proof}

Thus far we have avoided using the language of signed, colored graphs
to describe shifted dual equivalence. The following result is
characterizing the local structure of shifted dual equivalence classes
analogous to the original definition of dual equivalence graphs in
\cite{Assaf-DEGs-I}. 

\begin{lemma}
  Let $\{\varphi_i\}_{1<i<n-1}$ be a shifted dual equivalence for
  $(\shSYT(\gamma),\Peak)$ for some strict partition $\gamma$ of $n$
  with $n \leq 7$. Then $\varphi_i = \sd_i$. In particular, there is a
  unique shifted dual equivalence for standard shifted tableaux of
  size at most $7$.
  \label{lem:small}
\end{lemma}

\begin{proof}
  Given $\gamma$ a strict partition of size $n \leq 7$, no two
  standard shifted tableaux of shape $\gamma$ have the same peak
  set. This is easy to observe from Figure~\ref{fig:SG7}, for example.
  The result is trivial for $\gamma = (n)$ or for $n \leq 3$ since
  there are no nontrivial involutions, so we have four cases to
  consider, and we may assume $\gamma \neq (n)$.

  For $n=4$ (see Figure~\ref{fig:shSYT}), the only case left to
  consider is $\gamma = (3,1)$ which has two standard shifted tableaux
  which must necessarily be paired by $\varphi_2$ to ensure they lie
  in a single equivalence class. By condition (i) for degree $4$, this
  is enough to characterize fixed points for $\varphi_i$ as those
  elements $T$ with $i,i+1\not\in\Peak(T)$
  (cf. \cite{Assaf-DEGs-I}[Definition 3.2, axiom 1]). Moreover, $i \in
  \Peak(T)$ if and only if $i+1 \in \Peak(\varphi_i(T))$
  (cf. \cite{Assaf-DEGs-I}[Definition 3.2, axiom 2]). Armed with this,
  the result now follows for $\gamma = (n-1,1)$ since each standard
  shifted tableaux has a unique peak.

  For $n=5$ (see Figure~\ref{fig:classes_sh}), the only case left to
  consider is $\gamma = (3,2)$ which has two standard shifted tableaux
  which must necessarily be paired by $\varphi_2$ and by $\varphi_3$
  to ensure they lie in a single equivalence class. In particular,
  condition (i) for degree $5$ now implies that when
  $\varphi_i(T) = \varphi_{i+1}(T)$ the cardinality of $\Peak(T) \cap
  \{i,i+1,i+2\}$ changes (cf. \cite{Assaf-DEGs-I}[Definition 3.2,
    axiom 3]).

  For $n=6$, there are two cases to consider. For $\gamma = (3,2,1)$,
  there are two standard shifted tableaux, neither of which is a fixed
  point for $\varphi_i$ for $i=2,3,4$. Therefore they must be
  connected by a triple edge. For $\gamma = (4,2)$, the five standard
  shifted tableaux have peak sets $\{3\}, \{2,4\}, \{2,5\},
  \{3,5\}, \{4\}$. Thus for each $i=2,3,4$, exactly one of the
  five standard shifted tableaux is a fixed point for $\varphi_i$, so
  given the toggling condition on peak sets, there are two possible
  pairing for each $\varphi_i$, $i=2,3,4$. If $\varphi_3$ pairs the
  tableaux with peak sets $\{3\}$ and $\{4\}$, then it must also pair
  the tableaux with peak sets $\{3,5\}$ and $\{2,4\}$. By the analysis
  for $n=5$ and condition (i) for degree $5$, this means that both
  $\varphi_2$ and $\varphi_4$ pair the tableaux with peak sets
  $\{3,5\}$ and $\{2,4\}$ as well. But then these two tableaux are in
  an equivalence class of their own, contradicting that classes are
  (locally) Schur $Q$-positive. Thus $\varphi_3$ pairs tableaux with
  peak sets $\{3\}$ and $\{2,4\}$ and also tableaux with
  peak sets $\{3,5\}$ and $\{4\}$. By the same logic, the former case
  is also paired by $\varphi_2$, and the latter by $\varphi_4$. This
  correctly constrains the structure.

  Finally, for $n=7$ (see Figure~\ref{fig:SG7}), there are three cases
  to consider, and the analysis is analogous to the case for $n=6$,
  where now we may appeal to the case $n=6$ as well. 
\end{proof}

Lemma~\ref{lem:small} states that the shifted dual equivalence on
standard shifted tableaux of size up to $7$ is unique. This result is
tight since there exist standard shifted tableaux of size $8$ with the
same shape and the same peak set. 

Given strict partitions $\gamma,\delta$ with $\gamma \subset \delta$, fix
a filling of the cells of $\delta\setminus\gamma$ with $|\gamma|+1,\ldots,|\delta|$, say $B$. Let
$\shSYT(\gamma,B) \subset \shSYT(\delta)$ be subset of shifted standard
tableaux that restrict to $B$ when skewed by $\gamma$. The resulting
shifted dual equivalence on $\shSYT(\gamma,B)$ has the same
involutions as $\shSYT(\gamma)$, but the $\Peak$ function has now
been extended. With this in mind, we show that for $\gamma$ a
partition of $n$, any extension of the peak function for
$\shSYT(\gamma)$ can be modeled by $\shSYT(\gamma,B)$ for some
augmenting tableau $B$. The following result is the shifted analog of
\cite{Assaf-DEGs-I}[Lemma 3.11].

\begin{lemma}
  Let $\{\varphi_i\}_{1<i<n}$ be a shifted dual equivalence for
  $(\A,\Peak)$. Let $T \in \A$ such that there exists an isomorphism
  $\phi$ from $([T]_{(2,n-2)}, \Peak\cap\{2,3,\ldots,n-1\})$ to
  $(\shSYT(\gamma),\Peak)$ for some strict partition $\gamma$ of
  $n$. Then there exists a unique standard shifted tableau $B$ of
  shape $\delta/\gamma$, $\delta$ a strict partition of size $n+1$, such
  that $\phi$ gives an isomorphism from $([T]_{(2,n-2)} ,\Peak)$ to
  $(\shSYT(\gamma,B),\Peak)$.
\label{lem:extend-signs}
\end{lemma}

\begin{proof}
  The result follows for $n \leq 6$ by Lemma~\ref{lem:small}, so
  assume $n\geq 6$. The restricted equivalence classes of
  $\shSYT(\gamma)$ under $\sd_2, \ldots, \sd_{n-3}$ may each be
  identified with a strict partition of $n-1$ contained in $\gamma$,
  or, equivalently, with the unique outer corner of $\gamma$
  containing the entry $n$ for each tableau of the restricted
  equivalence class. Therefore the isomorphism from $([T]_{(2,n-2)},
  \Peak\cap\{2,3,\ldots,n-1\})$ to $(\shSYT(\gamma),\Peak)$ allows us
  to identify each restricted equivalence classes of $[T]_{(2,n-2)}$
  under the maps $\varphi_2,\ldots,\varphi_{n-3}$ with an outer corner
  of $\gamma$. Given a restricted class $\C \subseteq [T]_{(2,n-2)}$,
  mark $\C$ with a $-$ if some element of $\C$ has a peak at $n$, and
  mark $\C$ with a $+$ if no element of $\C$ has a peak at $n$. We
  claim that if $\C$ and $\D$ are two restricted equivalence classes
  with the corner of $\C$ above the corner of $\D$, then if $\C$ is
  marked $-$, so is $\D$, and if $\D$ is marked $+$ then so is
  $\C$. That is to say, we have the situation depicted in
  Figure~\ref{fig:monotonic}. This being the case, letting $B$ be the
  augmenting tableau of shape $\gamma$ with $n+1$ added to the inner
  corner above which $n$ cannot be a peak and below which $n$ can be a
  peak, we have that $([T]_{(2,n-2)} ,\Peak)$ is isomorphic to
  $(\shSYT(\gamma,B),\Peak)$.

  \begin{figure}[ht]
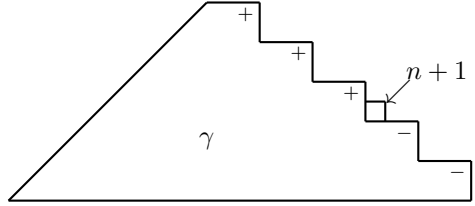

    \begin{center}
      \psset{xunit=2em}
      \psset{yunit=1.5em}
      \pspicture(-4,0)(5,5)
      \psline(1,4)(1,5)
      \psline(2,3)(2,4)
      \psline(3,2)(3,3)
      \psline(4,1)(4,2)
      \psline(5,0)(5,1)
      \psline(-3.75,0)(5,0)
      \psline(-3.75,0)(0,5)
      \psline(4,1)(5,1)
      \psline(3,2)(4,2)
      \psline(2,3)(3,3)
      \psline(1,4)(2,4)
      \psline(0,5)(1,5)
      \rput(0.75,4.7){$_+$}
      \rput(1.75,3.7){$_+$}
      \rput(2.75,2.7){$_+$}
      \rput(3.75,1.7){$_-$}
      \rput(4.75,0.7){$_-$}
      \psline(3,2.5)(3.375,2.5)
      \psline(3.375,2)(3.375,2.5)
      \rput(3.625,2.75){$\swarrow$}
      \rput(4.35,3.25){$n+1$}
      \rput(0,1.5){$\gamma$}
      \endpspicture
      \caption{\label{fig:monotonic} Identifying the unique position
        for $n+1$ based on the peak set.}
    \end{center}
  \end{figure}

  To prove the claim, assume for contradiction that the corner of $\C$
  is above the corner of $\D$ and that $\C$ is marked $-$ and $\D$ is
  marked $+$. We subclaim that the corner of $\C$ cannot be on the
  main diagonal. For $U \in \C$ with a peak at $n$,
  Lemma~\ref{lem:small} ensures that the restricted class
  $[U]_{(n-4,n-1)}$ is isomorphic to $\shSYT(\varepsilon)$ for a unique
  strict partition $\varepsilon$ of size $7$. If $U$ maps to $A\in
  \shSYT(\varepsilon)$ under this isomorphism, then $[U]_{(n-4,n-2)}$ is
  isomorphic to $[A]_{(2,4)}$. In particular, $A$ has a peak at $6$,
  ensuring that $5$ is weakly above $6$ in $A$. By the uniqueness in
  Lemma~\ref{lem:small}, the image of $U$ in $\shSYT(\gamma)$ must
  also map isomorphically to $A$ when restricted to entries
  $n-5,\ldots,n$ and maps $\sd_{n-4},\sd_{n-3},\sd_{n-2}$. In
  particular, the two must have the same descent set, forcing $n-1$ to
  be weakly above $n$. Thus $n$ cannot be in a top row of length $1$. 

  Choose a shifted tableau $D$ of shape $\gamma$ with specified
  positions for $n,n-1,\ldots,n-5$ as follows. Put $n$ in the outer
  corner corresponding to $\D$, $n-1$ in the outer corner
  corresponding to $\C$, and $n-2$ between these two corners. This
  placement for $n-2$ is always possible since there must be at least
  one cell that is both left of $n-1$ and below $n$ in order for both
  to be outer corners. Furthermore, by the prior assertion that the
  corner for $\C$ is not on the main diagonal, we may place $n-3$ left
  of $n-1$. Therefore if we removed $n-1$ and $n$, then $n-3$ and
  $n-2$ would both occupy outer corners. Given this, we may place
  $n-4$ below $n-3$ and left of $n-2$. Finally, since $\gamma$ is
  still strict, there is at least one cell left of $n-4$, so we may
  place $n-5$ there. See Figure~\ref{fig:six}.

  \begin{figure}[ht]
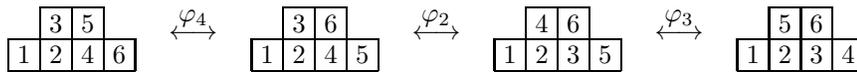

    \begin{displaymath}
      \tableau{ & 3 & 5 \\ 1 & 2 & 4 & 6}
      \hspace{1em} \stackrel{\displaystyle\varphi_{4}}{\longleftrightarrow} \hspace{1em}
      \tableau{ & 3 & 6 \\ 1 & 2 & 4 & 5}
      \hspace{1em} \stackrel{\displaystyle\varphi_{2}}{\longleftrightarrow} \hspace{1em}
      \tableau{ & 4 & 6 \\ 1 & 2 & 3 & 5}
      \hspace{1em} \stackrel{\displaystyle\varphi_{3}}{\longleftrightarrow} \hspace{1em}
      \tableau{ & 5 & 6 \\ 1 & 2 & 3 & 4}
    \end{displaymath}
    \caption{\label{fig:six}Relative positions for $n-5,\ldots,n$ in
      $D$ (far left) and $C$ (far right).}
  \end{figure}

  With $D$ as described, $\sd_{n-2}$ acts on $D$ by interchanging
  $n-1$ and $n$ with witness $n-2$ and bystander $n-3$. Then
  $\sd_{n-4}$ acts on $\sd_{n-2}(D)$ by interchanging $n-3$ and $n-2$
  with witness $n-4$ and bystander $n-5$. Set $C =
  \sd_{n-3}\sd_{n-4}\sd_{n-2}(D)$, where $\sd_{n-3}$ acts by
  interchanging $n-2$ and $n-1$ with witness $n-3$ and bystander
  $n-4$. Thus the element of $\D$ that maps to $D$, say $d$, and
  the element of $\C$ that maps to $C$, say $c$, lie in the same class
  under $\varphi_{n-4},\varphi_{n-3},\varphi_{n-2}$. Finally, we use
  the assumption on peak sets and Lemma~\ref{lem:small} for
  $[c]_{(n-4,n-1)} = [d]_{(n-4,n-1)}$ to see that $d$ must correspond to
  degree $7$ shape $(5,2)$ whereas $c$ must correspond to degree $7$
  shape $(4,2,1)$, which are not the same.
\end{proof}

The main result that will establish Theorem~\ref{thm:positivity_sh} is
the following shifted analog of \cite{Assaf-DEGs-I}[Theorem 3.12].

\begin{theorem}
  Let $\{\varphi_i\}_{1<i<n}$ be a shifted dual equivalence for
  $(\A,\Peak)$ with a single equivalence class, and suppose that for
  each $T \in \A$ there exists an isomorphism from
  $([T]_{(2,n-2)},\Peak\cap\{2,3,\ldots,n-1\})$ to
  $(\shSYT(\gamma),\Peak)$ for some strict partition $\gamma$ of
  $n$. Then there exists a morphism from $(\A,\Peak)$ to
  $(\shSYT(\varepsilon), \Peak)$ for a unique strict partition $\varepsilon$
  of $n+1$ containing $\gamma$.
\label{thm:cover}
\end{theorem}

\begin{proof}
  For $n < 7$, the result follows from Lemmas~\ref{lem:extend-signs}
  and \ref{lem:small}.  If $\varphi_{n-1}$ acts trivially on the
  entire equivalence class, then $\Peak \equiv \varnothing$ and
  $\varepsilon = (n+1)$.  Therefore we proceed by induction on $n$ and
  assuming that $\A$ has at least one element (thus two elements) not
  fixed by $\varphi_{n-1}$.

  By Lemma~\ref{lem:extend-signs}, the isomorphism from
  $([T]_{(2,n-2)},\Peak\cap\{2,3,\ldots,n-1\})$ to
  $(\shSYT(\gamma),\Peak)$ extends to an isomorphism from
  $([T]_{(2,n-2)},\Peak)$ to $(\shSYT(\gamma,B),\Peak)$ for a unique
  augmenting shifted tableau $B$, say with shape $\varepsilon/\gamma$. We
  will show that for any $[T]_{(2,n-2)}$, the shape $\varepsilon$ is the
  same and that we may glue these isomorphisms together to obtain a
  morphism from $(\A,\Peak)$ to $(\shSYT(\varepsilon), \Peak)$.

  \begin{figure}[ht]
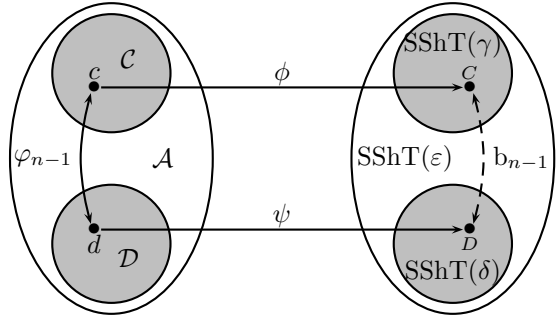

    \begin{center}
      \psset{xunit=3ex}
      \psset{yunit=2.5ex}
      \pspicture(0,0)(14,10)
      \pscircle[fillstyle=solid,fillcolor=lightgray](12,2){.8}
      \rput(12.5,2.5){$\bullet$}
      \rput(12.5,2){$_D$}
      \rput(12,1){$\shSYT(\delta)$}
      \pscircle[fillstyle=solid,fillcolor=lightgray](12,8){.8}
      \rput(12.5,7.5){$\bullet$}
      \rput(12.5,8){$_C$}
      \rput(12,9){$\shSYT(\gamma)$}
      \psellipse(12,5)(3,5.5)
      \rput(10.6,5){$\shSYT(\varepsilon)$}
      \pscurve[linestyle=dashed]{<->}(12.6,2.7)(12.8,3.75)(12.9,5)(12.8,6.25)(12.6,7.3)
      \rput(14,5){$\sd_{n-1}$}
      \pscircle[fillstyle=solid,fillcolor=lightgray](2,2){.8}
      \rput(1.5,2.5){$\bullet$}
      \rput(1.5,2){$d$}
      \rput(2.5,1.5){$\mathcal{D}$}
      \pscircle[fillstyle=solid,fillcolor=lightgray](2,8){.8}
      \rput(1.5,7.5){$\bullet$}
      \rput(1.5,8){$c$}
      \rput(2.5,8.5){$\C$}
      \psellipse(2,5)(3,5.5)
      \rput(3.5,5){$\A$}
      \pscurve{<->}(1.4,2.7)(1.2,3.75)(1.1,5)(1.2,6.25)(1.4,7.3)
      \rput(0,5){$\varphi_{n-1}$}
      \psline{->}(1.7,7.5)(12.3,7.5)
      \rput(7,8){$\phi$}
      \psline{->}(1.7,2.5)(12.3,2.5)
      \rput(7,3){$\psi$}
      \endpspicture
      \caption{\label{fig:extend} An illustration of the gluing
        process.}
    \end{center}
  \end{figure}

  Suppose $d = \varphi_{n-1}(c)$. Let $\C = [c]_{(2,n-2)}$ and $\D =
  [d]_{(2,n-2)}$. Let $\phi$ (resp. $\psi$) be the isomorphism from
  $\C$ (resp. $\D$) to $\shSYT(\gamma)$ (resp. $\shSYT(\delta)$), and
  set $C = \phi(c)$; see Figure~\ref{fig:extend}. We will show that
  $\psi(d) = \sd_{n-1}(C)$, and hence if $\gamma,B$ has shape
  $\varepsilon$, then so does $\delta,B$, and the maps $\phi$ and $\psi$
  glue together to give an morphism from $\C \cup \D$ to
  $\shSYT(\varepsilon)$ that commutes with $\varphi_{n-1}$ and
  $\sd_{n-1}$. There are two cases to consider, based on the relative
  positions of $n-1,n,n+1$ in $C$, regarded as a shifted tableau of
  shape $\varepsilon$.

  First suppose that $\sd_{n-1}$ fixes $n+1$. We will show that, in
  this case, $\C = \D$. There are three subcases to consider based on
  how $\sd_{n-1}$ acts on $C$ and $D$. For each, the idea is to use
  $\sd_2,\ldots,\sd_{n-5}$ to move to a tableau $C'$ for which
  $\sd_{n-1}(C') = \sd_{n-2}(C') = D'$, and then move from $D'$ back
  to $D$ by reversing the sequence of $\sd_2,\ldots,\sd_{n-5}$ using
  condition (ii) of Definition~\ref{def:strong_sh}; see
  Figure~\ref{fig:internal}.

  If $\sd_{n-1}$ interchanges $n-2$ and $n-1$, then $n$ must be the
  witness and $n+1$ the bystander, so there must exist a cell between
  $n-2$ and $n-1$ containing an entry at most $n-3$. Let $C'$ be the
  shifted tableau with $n-2,n-1,n,n+1$ in the same position as in $C$
  and with $n-3$ between $n-2$ and $n-1$. Then $\varphi_{n-2}$ also
  acts on $C'$ by exchanging $n-2$ and $n-1$, where $n-3$ and $n$ are
  the witness and bystander in some order.

  If $\sd_{n-1}$ interchanges $n-1$ and $n$ with $n-2$ as the witness
  and $n+1$ the bystander, then there must be a cell between $n+1$ and
  $n-2$ that contains an entry at most $n-3$. Let $C'$ be the shifted
  tableau with $n-2,n-1,n,n+1$ in the same position as in $C$ and with
  $n-3$ between $n-2$ and $n+1$. Then $\varphi_{n-2}$ also acts on
  $C'$ by exchanging $n-1$ and $n$, where $n-2$ is again the witness
  and $n-3$ is the bystander.

  If $\sd_{n-1}$ interchanges $n-1$ and $n$ with $n+1$ as the witness
  and $n-2$ the bystander, then again there must be a cell between
  $n+1$ and $n-2$ that contains an entry at most $n-3$. Let $C''$ be
  the shifted tableau with $n-2,n-1,n,n+1$ in the same position as in
  $C$ and with $n-3$ between $n-2$ and $n+1$. Now we go further,
  noting that there must be a cell between $n-2$ and $n-3$ containing
  an entry at most $n-4$, and we insist that $n-4$ lie in this cell in
  $C''$ as well. Finally, there must be a cell left of this that
  contains an entry at most $n-5$, and we insist that $n-5$ lie in
  this cell in $C''$ as well. Observe that the positions of
  $n-5,\ldots,n+1$ ensure that $[C'']_{(n-4,n-1)}$ is isomorphic to
  $\shSYT(4,3)$; see Figure~\ref{fig:SG7}. In particular, in this case
  $\sd_{n-4}$ commutes with $\sd_{n-1}$. Thus we may set $C' =
  \sd_{n-4}(C'')$, which interchanges $n-3$ and $n-2$. In this case,
  $\varphi_{n-2}$ acts on $C'$ by exchanging $n-1$ and $n$, where
  $n-2$ has been positioned as the witness and $n-3$ as the bystander.

  \begin{figure}[ht]
    \begin{center}
      \begin{displaymath}
        \begin{array}{\cs{5}\cs{6}\cs{5}\cs{6}\cs{3}\cs{3}\cs{3}\cs{6}\cs{5}\cs{6}c}
          \rnode{C}{\C} & \rnode{w}{c} & & \rnode{wl}{\B} & & \rnode{wm}{\cdots} 
          & & \rnode{wr}{\B} & & \rnode{wd}{c'} & \\[4ex]
          & & \Rnode{x}{d} & & \rnode{xl}{\B} & & \rnode{xm}{\cdots} 
          & & \rnode{xr}{\B} & & \rnode{xd}{d'} \\[7ex]
          & \Rnode[vref=0.5ex]{T}{C} & & \rnode{Tl}{\B} & & \rnode{Tm}{\cdots} 
          & & \rnode{Tr}{\B} & & \Rnode[vref=0.5ex]{Td}{C'} & \\[4ex]
          \rnode{G}{\shSYT(\gamma)} & & \Rnode[vref=0.5ex]{U}{D} & & \rnode{Ul}{\B} 
          & & \rnode{Um}{\cdots} & & \rnode{Ur}{\B} & & \Rnode[vref=0.5ex]{Ud}{D'} 
        \end{array}
        \psset{nodesep=3pt,linewidth=.1ex}
        \ncline{<->} {w}{wl}   \naput{\leq n-5}
        \ncline{<->} {wl}{wm}  
        \ncline{<->} {wm}{wr}  
        \ncline{<->} {wr}{wd}  \naput{\leq n-5}
        \ncline{<->} {x}{xl}   \nbput{\leq n-5}
        \ncline{<->} {xl}{xm}  
        \ncline{<->} {xm}{xr}  
        \ncline{<->} {xr}{xd}  \nbput{\leq n-5}
        \ncline[linestyle=dashed]{<->} {w}{x} \nbput{n-1}
        \ncline[linestyle=dashed]{<->} {wl}{xl} \nbput{n-1}
        \ncline[linestyle=dashed]{<->} {wr}{xr} \nbput{n-1}
        \ncline[offset=2pt]{<->} {wd}{xd} \naput {n-2}
        \ncline[linestyle=dashed,offset=2pt]{<->} {xd}{wd} \naput{n-1}
        \ncline[linewidth=.2ex,nodesep=6pt]{->} {C}{G}   \nbput{\phi}
        \ncline{<->} {T}{Tl}   \naput{\leq n-5}
        \ncline{<->} {Tl}{Tm}  
        \ncline{<->} {Tm}{Tr}  
        \ncline{<->} {Tr}{Td}  \naput{\leq n-5}
        \ncline{<->} {U}{Ul}   \nbput{\leq n-5}
        \ncline{<->} {Ul}{Um}  
        \ncline{<->} {Um}{Ur}  
        \ncline{<->} {Ur}{Ud}  \nbput{\leq n-5}
        \ncline[linestyle=dashed]{<->} {T}{U} \nbput{n-1}
        \ncline[linestyle=dashed]{<->} {Tl}{Ul} \nbput{n-1}
        \ncline[linestyle=dashed]{<->} {Tr}{Ur} \nbput{n-1}
        \ncline[offset=2pt]{<->} {Td}{Ud} \naput {n-2}
        \ncline[linestyle=dashed,offset=2pt]{<->} {Ud}{Td} \naput{n-1}
      \end{displaymath}
    \end{center}
    \caption{\label{fig:internal} The path from $C$ to $D$ in
      $\shSYT(\gamma)$ and its lift in $\C$.}
  \end{figure}

  In all cases, we have shown that $D \in [C]_{2,n-2}$. Therefore we
  can lift this to $\A$ to conclude that $d \in [c]_{2,n-2} = \C$. Set
  $c' = \phi^{-1}(C')$ and $d' = \psi^{-1}(D')$. By
  Lemma~\ref{lem:small}, there is a unique shifted dual equivalence
  structure on $[c']_{n-4,n-1}$ and on $[C']_{n-4,n-1}$, which forces
  $\varphi_{n-1}(c') = \phi^{-1}(\sd_{n-1}(C')) = \phi^{-1}(D') =
  d'$. Thus we may invoke condition (ii) of
  Definition~\ref{def:strong_sh} to conclude that $d =
  \varphi_{n-1}(c) = \phi^{-1} (\sd_{n-1}(C)) = \phi^{-1}(D)$. In
  this case $\C=\D$ and, by Proposition~\ref{prop:noauto-noniso},
  $\psi=\phi$.

  For the second case, we may assume $\sd_{n-1}$ acts on $C$ by
  interchanging $n$ and $n+1$ with $n-1$ as the witness and $n-2$ as
  the bystander. Consider the subset of $\shSYT(\gamma,B)$ with $n$
  and $n+1$ fixed in the same position as in $C$ and $n-1$ lying
  anywhere between them and $n-2$ anywhere left of $n-1$. In terms of
  the equivalences, for $n-1$ in a fixed position, these are all
  tableaux reachable using $\sd_h$ with $h \leq n-5$ and certain
  instances of $\sd_{n-4}$. We will return soon to the question of
  which instances these are. For now, let $\overline{[C]}$ denote the
  union of the equivalence classes $[C']_{(2,n-5)}$, where $C'$ has
  $n-1$ between $n$ and $n+1$ and $n-2$ somewhere left of $n-1$. The
  range of positions for $n-2$ determines the range of positions for
  $n-1$, which in turn uniquely determines the cells containing $n$
  and $n+1$, and so this set uniquely determines
  $\varepsilon$. Furthermore, which of $n,n+1$ occupies which cell is
  determined by whether or not $n$ is a peak. Therefore $\phi^{-1}$
  lifts $\overline{[C]}$ to a subset of to $\C$ that completely
  determines $\varepsilon$ as well as the positions of $n$ and $n+1$
  in the image of this set under $\phi$. We will show that the
  corresponding lifted subset for $\D$ is isomorphic but with $n$
  toggled into or out of the peak set.
  
  \begin{figure}[ht]
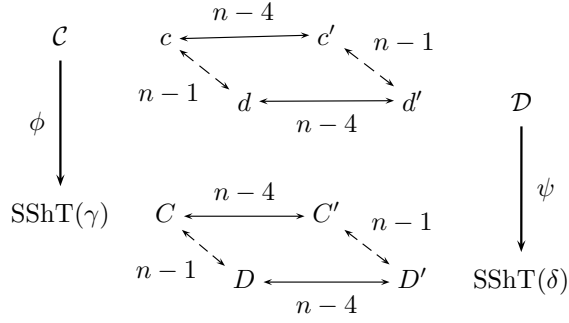

    \begin{center}
      \begin{displaymath}
        \begin{array}{\cs{4}\cs{5}\cs{5}\cs{5}\cs{4}c}
          \rnode{C}{\C} & \rnode{w}{c} & & \rnode{wd}{c'} & & \\[3ex]
          & & \rnode{x}{d} & & \rnode{xd}{d'} & \rnode{D}{\D} \\[7ex]
          \rnode{A}{\shSYT(\gamma)} & \Rnode[vref=0.5ex]{T}{C} & & \Rnode[vref=0.5ex]{Td}{C'} & & \\[3ex]
          & & \Rnode[vref=0.5ex]{U}{D} & & \Rnode[vref=0.5ex]{Ud}{D'} & \rnode{B}{\shSYT(\delta)} 
        \end{array}
        \psset{nodesep=3pt,linewidth=.1ex}
        \ncline{<->} {w}{wd}   \naput{n-4}
        \ncline{<->} {x}{xd}   \nbput{n-4}
        \ncline{<->} {T}{Td}   \naput{n-4}
        \ncline{<->} {U}{Ud}   \nbput{n-4}
        \ncline[linestyle=dashed]{<->} {w}{x} \nbput{n-1}
        \ncline[linestyle=dashed]{<->} {wd}{xd} \naput{n-1}
        \ncline[linestyle=dashed]{<->} {T}{U} \nbput{n-1}
        \ncline[linestyle=dashed]{<->} {Td}{Ud} \naput{n-1}
        \ncline[linewidth=.2ex,nodesep=6pt]{->} {C}{A}   \nbput{\phi}
        \ncline[linewidth=.2ex,nodesep=6pt]{->} {D}{B}   \naput{\psi}
      \end{displaymath}
    \end{center}
    \caption{\label{fig:typeC} Illustration of $\sd_{n-4}$ on
      $\overline{[C]} \cup \overline{[D]}$ and their lifts in $\C \cup
      \D$.}
  \end{figure}

  To prove the assertion, we return to the question of which instances
  of $\varphi_{n-4}$ are allowed in generating $\overline{[C]}$. For
  this, we may consider the structure of the degree $7$ equivalence
  class generated by $\varphi_{n-4},\ldots,\varphi_{n-1}$ which, by
  Lemma~\ref{lem:small}, have the structure of one of the components
  depicted in Figure~\ref{fig:SG7}. Examining the possibilities, it
  follows that whenever $\varphi_{n-4}$ keeps $n-2$ left of $n-1$, the
  map also commutes with $\varphi_{n-1}$, as depicted in
  Figure~\ref{fig:typeC}. By condition (ii) of shifted dual
  equivalences, $\varphi_h$ also commutes with $\varphi_{n-1}$ for $h
  \leq n-5$. Therefore all involutions used to generate
  $\overline{[C]}$ commute with $\varphi_{n-1}$. Thus $\varphi_{n-1}$
  gives an isomorphism from $\phi^{-1}(\overline{[C]})$ to a subset of
  $\D$. Let $D = \psi(d)$. Then $\overline{[D]} =
  \psi(\varphi_{n-1}(\phi^{-1}(\overline{[C]})))$. Since $\phi,\psi$
  and $\varphi_{n-1}$ are isomorphisms, $\overline{[D]}$ is isomorphic
  to $\overline{[C]}$, though exactly one has $n$ as a peak. By the
  earlier characterization of $\overline{[C]}$, this implies that the
  tableaux in $\overline{[D]}$ have shape $\varepsilon$, with the
  cells containing $n$ and $n+1$ reversed from that in
  $\overline{[C]}$. In particular, $\overline{[D]} =
  \sd_{n-1}(\overline{[C]})$, that is to say, $\phi$ and $\psi$
  glue to give a morphism from $\C \cup \D \subset \A$ to
  $\shSYT(\gamma,A) \cup \shSYT(\delta,B) \subset \shSYT(\varepsilon)$
  that respects $\varphi_{n-1}$. Therefore this map lifts to a
  morphism from $\A$ to $\shSYT(\varepsilon)$.
\end{proof}

Thus far we have used condition (i) of shifted dual equivalence only
for restricted equivalence classes of degree up to $7$. If this weaker
condition is all that is used, then Theorem~\ref{thm:cover} proves
that there is a morphism from $(\A,\Peak)$ to
$\shSYT(\varepsilon)$. While this morphism is always surjective, in
order to show that it is injective we must invoke condition (i) for
degree $9$.

\begin{theorem}
  Given any shifted dual equivalence $\{\varphi_i\}_{1<i<n}$ for
  $(\A,\Peak)$ with a single equivalence class, there exists a unique
  strict partition $\gamma$ of size $n+1$ such that there is an
  isomorphism of shifted dual equivalences between $(\A, \Peak)$ under
  $\{\varphi_i\}_{1<i<n}$ and $(\shSYT(\gamma), \Peak)$ under
  $\{\sd_i\}_{1<i<n}$.
  \label{thm:isomorphic_sh}
\end{theorem}

\begin{proof}
  We proceed by induction on $n+1$, noting that the result follows
  from Lemma~\ref{lem:small} for $n+1 \leq 7$. The involutions
  $\{\varphi_i\}_{1<i<n-1}$ give a shifted dual equivalence for any
  restricted equivalence class $[T]_{(2,n-2)}$, and so, by induction,
  each such class is isomorphic to $(\shSYT(\delta), \Peak)$ for a
  unique strict partition $\delta$ of $n$. By Theorem~\ref{thm:cover},
  there exists a morphism, say $\phi$, from $\A$ to $\shSYT(\gamma)$
  for a unique strict partition $\gamma$ of $n+1$. Surjectivity
  follows from the fact that $\A$ has a single equivalence class under
  $\{\varphi_i\}_{1<i<n}$ and that $\phi$ commutes with $\varphi_i$
  and $\sd_i$. To prove that $\phi$ is injective, we first claim that
  the fiber over each standard shifted tableau has the same
  cardinality.

  To prove the claim, we show that for any restricted equivalence
  class $\C$ under $\{\varphi_i\}_{1<i<n-1}$, say with $\phi(\C) =
  \shSYT(\delta)$, and any strict partition $\varepsilon \subset \gamma$
  of size $n$, there is a unique restricted equivalence class $\D$
  under $\{\varphi_i\}_{1<i<n-1}$ with $\phi(\D) = \shSYT(\varepsilon)$
  such that $d = \varphi_{n-1}(c)$ for some $c \in \C$ and some $d \in
  \D$. Once established, this gives a bijective correspondence between
  equivalence classes in $\phi^{-1}(\shSYT(\delta))$ and in
  $\phi^{-1}(\shSYT(\varepsilon))$, thus proving the result.

  To prove existence, if $\varepsilon \neq \delta$, let $C$ be a shifted
  standard tableau of shape $\gamma$ with $n+1$ in position
  $\gamma/\delta$, $n$ in position $\gamma/\varepsilon$, $n-1$ lying
  between, and $n-2$ lying left of $n-1$. Otherwise let $C$ be a
  standard shifted tableau with $n+1$ in position $\gamma/\delta$ and
  $n$ and $n-1$ lying on opposite sides, again with $n-2$ left of
  $n-1$. Let $c$ be the unique element in $\phi^{-1}(C) \cap \C$. Then
  $\phi(\varphi_{n-1}(c)) = \varphi_{n-1}(\phi(c)) \in \shSYT(\varepsilon)$.

  To prove uniqueness, let $d = \varphi_{n-1}(c)$ with $c \in \C$ and
  $d \in \D$. If $n+1$ lies between $n$ and $n-1$ in $\phi(C)$, then
  $\delta = \varepsilon$, and just as in the proof of
  Theorem~\ref{thm:cover}, we concluded that $\D = \C$ as
  desired. Alternately, assume $n-1$ lies between $n$ and $n+1$ in
  $\phi(c)$, and suppose $\varphi_{n-1}(c')=d'$ with $C' \in \C$ and
  $d' \in \D'$ where $\phi(D') = \shSYT(\varepsilon)$. Since $\phi(c)$
  and $\phi(c')$ have the same shape, and $\varphi_{n-1}(\phi(c)) =
  \phi(\varphi_{n-1}(c)) = \phi(d)$ and $\varphi_{n-1}(\phi(w')) =
  \phi(\varphi_{n-1}(w')) = \phi(d')$ have the same shape as
  well. Just as in the proof of Theorem~\ref{thm:cover}, $\phi(c)$ and
  $\phi(c')$ must lie in the same equivalence class under
  $\sd_2,\ldots,\sd_{n-5}$ and those instances of $\sd_{n-4}$ that
  commute with $\sd_{n-1}$. Lifting this class via $\phi$, $c$ and
  $c'$ must lie in the same equivalence class under
  $\varphi_2,\ldots,\varphi_{n-5}$ and those instances of
  $\varphi_{n-4}$ that commute with $\varphi_{n-1}$. Therefore
  applying $\varphi_{n-1}$, $d$ and $d'$ lie in the same equivalence
  class under $\varphi_2,\ldots,\varphi_{n-5}$ and those instances of
  $\varphi_{n-4}$ that commute with $\varphi_{n-1}$. In particular,
  $d' \in \D$, and so $\D = \D'$.

  Since each fiber has the same cardinality, say $k$, the generating
  function for $\A$ is $k Q_{\lambda}$. When $n \leq 9$, condition (i)
  ensures $k=1$ and the map to $\shSYT(\gamma)$ is an isomorphism. In
  particular, we may assume the restricted equivalence classes under
  $\varphi_{n-6},\ldots,\varphi_{n-1}$ are isomorphic to some
  $\shSYT(\delta)$. Given two standard shifted tableaux $C,D$ of the
  same shape, there exist tableaux $C',D'$ such that $C' \in
  [C]_{(2,n-2)}$, $D' \in [D]_{(2,n-2)}$, and $C' = \varphi_{n-1}(D')$
  (cf. axiom $6$ for dual equivalence graphs in
  \cite{Assaf-DEGs-I}[Definition 3.2]).

  Suppose $T,U,V,X \in \A$, with $U = \varphi_{n-1}(T)$, $V =
  \varphi_{n-1}(X)$, and $U$ and $V$ lie in the same restricted
  equivalence class under $\varphi_2,\ldots, \varphi_{n-2}$. We will
  show that there exist $T^{\prime},X^{\prime}$ in the same restricted
  equivalence classes as $T,X$, respectively, such that $T^{\prime}$
  and $X^{\prime}$ lie in the same degree $9$ equivalence class under
  $\varphi_{n-6},\ldots,\varphi_{n-1}$; see Figure~\ref{fig:ax6}. By
  the earlier remark and the result for degree up to $9$, this implies
  that there exist $T'',X''$ such that $T'' \in [T']_{(n-6,n-2)}
  \subseteq [T]_{(2,n-2)}$, $X'' \in [X']_{(n-6,n-2)} \subseteq
  [X]_{(2,n-2)}$, and $T'' = \varphi_{n-1}(X'')$. In particular, we
  must have $k=1$.
            
  \begin{figure}[ht]
    \begin{displaymath}
      \begin{array}{c@{\hskip 3em}c@{\hskip 3em}c}
        \ovalnode{L}{ \begin{array}{c}
            \rnode{T}{T} \\ [5ex]
            \rnode{TT}{T^{\prime}}
          \end{array} } &
        \ovalnode{M}{ \begin{array}{c@{\hskip 2em}c@{\hskip 2em}c@{\hskip 2em}c@{\hskip 2em}c@{\hskip 2em}c}
            \rnode{U}{U} & & & & & \rnode{V}{V} \\[5ex]
            \rnode{UU}{U^{\prime}} & \rnode{A}{\bullet} & \rnode{B}{\bullet} & \rnode{C}{\bullet} & \rnode{D}{\bullet} & \rnode{VV}{V^{\prime}} 
          \end{array} } &
        \ovalnode{R}{ \begin{array}{c}
            \rnode{X}{X} \\[5ex]
            \rnode{XX}{X^{\prime}} 
          \end{array} }
      \end{array}
      \everypsbox{\scriptstyle}
      \psset{nodesep=3pt,linewidth=.1ex}
      \ncline[nodesepB=8pt]{<->} {T}{U}  \naput{\varphi_{n-1}}
      \ncline[nodesepB=8pt]{<->} {TT}{UU}\naput{\varphi_{n-1}}
      \ncline[nodesepA=8pt]{<->} {VV}{XX}\naput{\varphi_{n-1}}
      \ncline[nodesepA=8pt]{<->} {V}{X}  \naput{\varphi_{n-1}}
      \ncline{<->} {UU}{A} \naput{\varphi_{n-2}}
      \ncline{<->} {A}{B}  \naput{\varphi_{n-4}}
      \ncline{<->} {B}{C} \naput{\varphi_{n-6}}
      \ncline{<->} {C}{D}  \naput{\varphi_{n-4}}
      \ncline{<->} {D}{VV} \naput{\varphi_{n-2}}
    \end{displaymath}
    \caption{\label{fig:ax6} Establishing injectivity from degree $9$.}
  \end{figure}

  By the inductive hypothesis and Theorem~\ref{thm:cover}, we may
  identify $T,U,V,X$ with shifted tableaux of shape $\gamma$,
  $|\gamma| = n+1$, and, when restricted to entries up to $n$,
  $T,U,V,X$ have shapes $\delta,\varepsilon,\varepsilon,\zeta$,
  respectively, with $\delta,\varepsilon,\zeta$ distinct strict
  partitions contained in $\gamma$. Then $\gamma/\rho$ must be
  a corner (end of row, top of column) for $\rho =
  \delta,\varepsilon,\zeta$. Assume these cells appear with
  $\gamma/\delta$ northeast of $\gamma/\zeta$
  northeast of $\gamma/\varepsilon$, noting that the other
  orders can be resolved in a similar way. Let $T^{\prime}$ be any
  shifted standard tableaux of shape $\gamma$ with $n+1$ in position
  $\gamma/\delta$, $n$ in position
  $\gamma/\varepsilon$, $n-1$ in position
  $\gamma/\zeta$, $n-2$ between $n+1$ and $n-1$, $n-3$ left of $n-2$
  (which is possible since $\gamma$ is a shifted shape), $n-4$ between
  $n-1$ and $n$, $n-5$ between $n-2$ and $n-4$, $n-6$ between $n-3$
  and $n-5$, and $n-7$ left of $n-6$ (which is possible since $\gamma$
  is a shifted shape). See Figure~\ref{fig:nine} for an illustration. 

  \begin{figure}
    \begin{displaymath}
      \begin{array}{c@{\hskip 3em}c@{\hskip 3em}c@{\hskip 3em}c}
        \rnode{a}{\tableau{ & & 9 \\ & 5 & 6 & 7 \\ 1 & 2 & 3 & 4 & 8}} &
        \rnode{b}{\tableau{ & & 8 \\ & 5 & 6 & 7 \\ 1 & 2 & 3 & 4 & 9}} &
        \rnode{c}{\tableau{ & & 7 \\ & 5 & 6 & 8 \\ 1 & 2 & 3 & 4 & 9}} &
        \rnode{d}{\tableau{ & & 7 \\ & 4 & 6 & 8 \\ 1 & 2 & 3 & 5 & 9}} \\[12ex]
        \rnode{h}{\tableau{ & & 6 \\ & 3 & 5 & 9 \\ 1 & 2 & 4 & 7 & 8}} &
        \rnode{g}{\tableau{ & & 6 \\ & 3 & 5 & 8 \\ 1 & 2 & 4 & 7 & 9}} &
        \rnode{f}{\tableau{ & & 7 \\ & 3 & 5 & 8 \\ 1 & 2 & 4 & 6 & 9}} &
        \rnode{e}{\tableau{ & & 7 \\ & 3 & 6 & 8 \\ 1 & 2 & 4 & 5 & 9}} 
      \end{array}
      \everypsbox{\scriptstyle}
      \psset{nodesep=3pt,linewidth=.1ex}
      \ncline{<->} {a}{b}  \naput{\sd_{n-1}}
      \ncline{<->} {b}{c}  \naput{\sd_{n-2}}
      \ncline[offset=4pt]{<->} {c}{d} \naput{\sd_{n-4}}
      \ncline[offset=0pt]{<->} {c}{d}  
      \ncline[offset=-4pt]{<->} {c}{d} \nbput{\overset{\scriptstyle\sd_{n-5}}{\sd_{n-3}}}
      \ncline{<->} {d}{e}  \naput{\sd_{n-6}}
      \ncline[offset=2pt]{<->} {f}{e}  \naput{\sd_{n-4}}
      \ncline[offset=-2pt]{<->} {f}{e} \nbput{\sd_{n-3}}
      \ncline{<->} {g}{f}  \naput{\sd_{n-2}}
      \ncline{<->} {h}{g}  \naput{\sd_{n-1}}
    \end{displaymath}
    \caption{\label{fig:nine}Relative positions for $n-7,\ldots,n+1$
      in $T^{\prime}$ (top left) and $X^{\prime}$ (bottom left).}
  \end{figure}

  Set $U^{\prime} = \sd_{n-1}(T^{\prime})$. Since the shape of
  $U^{\prime}$ restricted to entries up to $n$ is $\varepsilon$, we
  have $U^{\prime} \in [U]_{(2,n-2)}$ where $U = \sd_{n-1}(T)$,
  and $V^{\prime} \in [V]_{(2,n-2)}$ where $V = \sd_{n-2}
  \sd_{n-4} \sd_{n-6} \sd_{n-4} \sd_{n-2}
  (U^{\prime})$; again, see Figure~\ref{fig:nine}. Set $X^{\prime} =
  \sd_{n-1}(V^{\prime})$. Since the shape of $X^{\prime}$
  restricted to entries up to $n$ is $\zeta$, we have $X^{\prime} \in
  [X]_{(2,n-2)}$ where $X = \sd_{n-1}(V)$. Moreover, since
  $X^{\prime} = \sd_{n-1} \sd_{n-2} \sd_{n-4}
  \sd_{n-6} \sd_{n-4} \sd_{n-2} \sd_{n-1}
  (T^{\prime})$, we have that $X^{\prime} \in
         [T^{\prime}]_{(n-6,n-1)}$ as desired.
\end{proof}

Theorem~\ref{thm:positivity_sh} now follows as a corollary to
Theorem~\ref{thm:isomorphic_sh} and \eqref{e:classes_sh}.

\section{Products of Schur $P$-functions}
\label{s:weak}

We now present a first application of shifted dual equivalence. For
$\gamma \subseteq \varepsilon$ strict partitions, we define the
shifted skew diagram $\varepsilon/\gamma$ to be the set theoretic
difference between $\varepsilon$ and $\gamma$. For example, the
shifted skew diagram for $(6,4,3,1)/(5,2)$ is given in
Figure~\ref{fig:skew}.

\begin{figure}[ht]
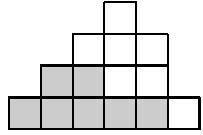

  \begin{center}
    \begin{displaymath}
      \tableau{ &  &  & \e \\  &  & \e & \e & \e \\ & \cb & \cb & \e & \e \\ \cb & \cb & \cb & \cb & \cb & \e}
    \end{displaymath}
    \caption{\label{fig:skew}The shifted Young diagram and shifted symmetric diagram for $(6,4,3,1)$.}
  \end{center}
\end{figure}

The combinatorial definitions for Schur $Q$-functions extend to skew
shifted diagrams \cite{Macdonald}, and the quasisymmetric expansions
in Section~\ref{s:quasi} hold for the shifted case as well. Precisely,
we have
\begin{equation}
  Q_{\varepsilon/\gamma} (X) \ = \ \sum_{S \in \shSYT(\varepsilon/\gamma)}
  2^{|\Peak(S)|+1} G_{\Peak(S)}(X)
\label{e:skew_Q}
\end{equation}
where $\shSYT(\varepsilon/\gamma)$ denotes the set of all standard
shifted tableaux of skew shifted shape $\varepsilon/\gamma$.

Schur $Q$ and $P$-functions have the same relationship as before,
though one must take care to track how many cells now live on the main
diagonal. The relation in \eqref{e:schur_P} becomes
\begin{equation}
  P_{\varepsilon/\gamma} (X) \ = \ 2^{\ell(\varepsilon) -
    \ell(\gamma)} Q_{\varepsilon/\gamma} (X) . 
\label{e:skew_P}
\end{equation}

Recall that the product of two Schur $P$-functions may be expanded in
the Schur $P$-function basis, so we may define integers
$f_{\gamma,\delta}^{\varepsilon}$ by
\begin{equation}
  P_{\gamma}(X) P_{\delta}(X) = \sum_{\varepsilon}
  f_{\gamma,\delta}^{\varepsilon} P_{\varepsilon}(X).
  \label{e:struct_P}
\end{equation}
Since the Schur $Q$- and $P$-functions form dual bases and the
operation of skewing is adjoint to multiplication
\cite{Macdonald}, these integers may also be defined by
\begin{equation}
  Q_{\varepsilon/\gamma}(X) = \sum_{\delta}
  f_{\gamma,\delta}^{\varepsilon} Q_{\delta}(X). 
  \label{e:struct_Q}
\end{equation}

Using the machinery of shifted dual equivalence, the second half of
Theorem~\ref{thm:P_pos} now follows.

\begin{corollary}
  For $\gamma \subseteq \varepsilon$ strict partitions,
  $f_{\gamma,\delta}^{\varepsilon}$ is the number of shifted dual
  equivalence classes of standard shifted tableaux of skew shifted
  shape $\varepsilon/\gamma$ under the action of
  $\{\sd_i\}_{1<i<|\varepsilon|-|\gamma|-1}$ that are isomorphic to
  $\shSYT(\delta)$. In particular, $f_{\gamma,\delta}^{\varepsilon}$
  is a nonegative integer.
  \label{cor:LRR}
\end{corollary}

%
%

\bibliographystyle{alpha} 
\bibliography{shifted.bib}

\begin{thebibliography}{BHRY}

\bibitem[AB12]{AssafBilley2012}
Sami~H. Assaf and Sara~C. Billey.
\newblock Affine dual equivalence and {$k$}-{S}chur functions.
\newblock {\em J. Comb.}, 3(3):343--399, 2012.

\bibitem[ABS]{ABS2014}
Sami Assaf, Nantel Bergeron, and Frank Sottile.
\newblock A combinatorial proof that {S}chubert vs. {S}chur coefficients are
  nonnegative.

\bibitem[Ass]{Assaf-DEGs-II}
Sami~H. Assaf.
\newblock Dual equivalence graphs {II}: {A} combinatorial proof of {LLT} and
  {M}acdonald positivity.
\newblock preprint, 2015.

\bibitem[Ass15]{Assaf-DEGs-I}
Sami~H. Assaf.
\newblock Dual equivalence graphs {I}: a new paradigm for {S}chur positivity.
\newblock {\em Forum Math. Sigma}, 3:e12, 33, 2015.

\bibitem[BHRY]{BilleyEtAl}
Sara Billey, Zach Hamaker, Austin Roberts, and Ben Young.
\newblock {C}oxeter-{K}nuth graphs and a signed {L}ittle map for type {$B$}
  reduced words.

\bibitem[Cho13]{Cho2013}
Soojin Cho.
\newblock A new {L}ittlewood-{R}ichardson rule for {S}chur {$P$}-functions.
\newblock {\em Trans. Amer. Math. Soc.}, 365(2):939--972, 2013.

\bibitem[Ges84]{Gessel1984}
Ira~M. Gessel.
\newblock Multipartite {$P$}-partitions and inner products of skew {S}chur
  functions.
\newblock In {\em Combinatorics and algebra (Boulder, Colo., 1983)}, volume~34
  of {\em Contemp. Math.}, pages 289--317. Amer. Math. Soc., Providence, RI,
  1984.

\bibitem[Hai92]{Haiman1992}
Mark~D. Haiman.
\newblock Dual equivalence with applications, including a conjecture of
  {P}roctor.
\newblock {\em Discrete Math.}, 99(1-3):79--113, 1992.

\bibitem[J{\'o}z91]{Jozefiak1991}
Tadeusz J{\'o}zefiak.
\newblock Schur {$Q$}-functions and cohomology of isotropic {G}rassmannians.
\newblock {\em Math. Proc. Cambridge Philos. Soc.}, 109(3):471--478, 1991.

\bibitem[Mac95]{Macdonald}
I.~G. Macdonald.
\newblock {\em Symmetric functions and {H}all polynomials}.
\newblock Oxford Mathematical Monographs. The Clarendon Press Oxford University
  Press, New York, second edition, 1995.
\newblock With contributions by A. Zelevinsky, Oxford Science Publications.

\bibitem[Pra91]{Pragacz1991}
Piotr Pragacz.
\newblock Algebro-geometric applications of {S}chur {$S$}- and
  {$Q$}-polynomials.
\newblock In {\em Topics in invariant theory ({P}aris, 1989/1990)}, volume 1478
  of {\em Lecture Notes in Math.}, pages 130--191. Springer, Berlin, 1991.

\bibitem[Sag87]{Sagan1987}
Bruce~E. Sagan.
\newblock Shifted tableaux, {S}chur {$Q$}-functions, and a conjecture of {R}.
  {S}tanley.
\newblock {\em J. Combin. Theory Ser. A}, 45(1):62--103, 1987.

\bibitem[Sch11]{Schur1911}
I.~Schur.
\newblock \"{U}ber {G}ruppen linearer {S}ubstitutionen mit {K}oeffizienten aus
  einem algebraischen {Z}ahlk\"orper.
\newblock {\em Math. Ann.}, 71(3):355--367, 1911.

\bibitem[Ser10]{Serrano2010}
Luis Serrano.
\newblock The shifted plactic monoid.
\newblock {\em Math. Z.}, 266(2):363--392, 2010.

\bibitem[Ste89]{Stembridge1989}
John~R. Stembridge.
\newblock Shifted tableaux and the projective representations of symmetric
  groups.
\newblock {\em Adv. Math.}, 74(1):87--134, 1989.

\bibitem[Wor84]{Worley1984}
Dale~Raymond Worley.
\newblock {\em A {T}HEORY OF {S}HIFTED {Y}OUNG {T}ABLEAUX}.
\newblock ProQuest LLC, Ann Arbor, MI, 1984.
\newblock Thesis (Ph.D.)--Massachusetts Institute of Technology.

\end{thebibliography}

\end{document}